\documentclass[12pt,leqno,draft]{article}
\usepackage[pagewise]{lineno}
\usepackage{amsfonts}
%\linespread{1.6}
\pagestyle{plain}
\usepackage{amsmath, amsthm, amsfonts, amssymb, color}
\usepackage{mathrsfs}
\usepackage{color}
\usepackage{stmaryrd}
\setlength{\topmargin}{0cm} \setlength{\oddsidemargin}{0cm}
\setlength{\evensidemargin}{0cm} \setlength{\textwidth}{16.5truecm}
\setlength{\textheight}{22truecm}

\newtheorem{thm}{Theorem}[section]

\newtheorem{lem}[thm]{Lemma}

\newtheorem{rem}[thm]{Remark}
\theoremstyle{definition}
\newtheorem{defn}{Definition}[section]
\newcommand{\scr}[1]{\mathscr #1}
\definecolor{wco}{rgb}{0.5,0.2,0.3}

\numberwithin{equation}{section} \theoremstyle{remark}

\newcommand{\ua}{\uparrow}

\title{{\bf Exponential Ergodicity and Propagation of Chaos for Path-Distribution Dependent Stochastic Hamiltonian System}\footnote{Supported in
 part by  NNSFC (12271398).}
}
\author{
{\bf     Xing Huang $^{a)}$, Wujun Lv $^{b)}$   }\\
\footnotesize{  a) Center for Applied Mathematics, Tianjin University, Tianjin 300072, China}\\
\footnotesize{  xinghuang@tju.edu.cn}\\
\footnotesize{b)        Department of Statistics, College of Science, Donghua University, Shanghai, 201620, China}\\
\footnotesize{  lvwujun@dhu.edu.cn}}
\begin{document}
\allowdisplaybreaks
\def\R{\mathbb R}  \def\ff{\frac} \def\ss{\sqrt} \def\B{\mathbf
B}
\def\N{\mathbb N} \def\kk{\kappa} \def\m{{\bf m}}
\def\ee{\varepsilon}\def\ddd{D^*}
\def\dd{\delta} \def\DD{\Delta} \def\vv{\varepsilon} \def\rr{\rho}
\def\<{\langle} \def\>{\rangle} \def\GG{\Gamma} \def\gg{\gamma}
  \def\nn{\nabla} \def\pp{\partial} \def\E{\mathbb E}
\def\d{\text{\rm{d}}} \def\bb{\beta} \def\aa{\alpha} \def\D{\scr D}
  \def\si{\sigma} \def\ess{\text{\rm{ess}}}
\def\beg{\begin} \def\beq{\begin{equation}}  \def\F{\scr F}
\def\Ric{\text{\rm{Ric}}} \def\Hess{\text{\rm{Hess}}}
\def\e{\text{\rm{e}}} \def\ua{\underline a} \def\OO{\Omega}  \def\oo{\omega}
 \def\tt{\tilde} \def\Ric{\text{\rm{Ric}}}
\def\cut{\text{\rm{cut}}} \def\P{\mathbb P} \def\ifn{I_n(f^{\bigotimes n})}
\def\C{\scr C}   \def\G{\scr G}   \def\aaa{\mathbf{r}}     \def\r{r}
\def\gap{\text{\rm{gap}}} \def\prr{\pi_{{\bf m},\varrho}}  \def\r{\mathbf r}
\def\Z{\mathbb Z} \def\vrr{\varrho} \def\ll{\lambda}
\def\L{\scr L}\def\Tt{\tt} \def\TT{\tt}\def\II{\mathbb I}
\def\i{{\rm in}}\def\Sect{{\rm Sect}}  \def\H{\mathbb H}
\def\M{\scr M}\def\Q{\mathbb Q} \def\texto{\text{o}} \def\LL{\Lambda}
\def\Rank{{\rm Rank}} \def\B{\scr B} \def\i{{\rm i}} \def\HR{\hat{\R}^d}
\def\to{\rightarrow}\def\l{\ell}\def\iint{\int}
\def\EE{\scr E}\def\no{\nonumber}
\def\A{\scr A}\def\V{\mathbb V}\def\osc{{\rm osc}}
\def\BB{\scr B}\def\Ent{{\rm Ent}}\def\3{\triangle}\def\H{\scr H}
\def\U{\scr U}\def\8{\infty}\def\1{\lesssim}\def\HH{\mathrm{H}}
 \def\T{\scr T}
 \def\R{\mathbb R}  \def\ff{\frac} \def\ss{\sqrt} \def\B{\mathbf
B} \def\W{\mathbb W}
\def\N{\mathbb N} \def\kk{\kappa} \def\m{{\bf m}}
\def\ee{\varepsilon}\def\ddd{D^*}
\def\dd{\delta} \def\DD{\Delta} \def\vv{\varepsilon} \def\rr{\rho}
\def\<{\langle} \def\>{\rangle} \def\GG{\Gamma} \def\gg{\gamma}
  \def\nn{\nabla} \def\pp{\partial} \def\E{\mathbb E}
\def\d{\text{\rm{d}}} \def\bb{\beta} \def\aa{\alpha} \def\D{\scr D}
  \def\si{\sigma} \def\ess{\text{\rm{ess}}}
\def\beg{\begin} \def\beq{\begin{equation}}  \def\F{\scr F}
\def\Ric{\text{\rm{Ric}}} \def\Hess{\text{\rm{Hess}}}
\def\e{\text{\rm{e}}} \def\ua{\underline a} \def\OO{\Omega}  \def\oo{\omega}
 \def\tt{\tilde} \def\Ric{\text{\rm{Ric}}}
\def\cut{\text{\rm{cut}}} \def\P{\mathbb P} \def\ifn{I_n(f^{\bigotimes n})}
\def\C{\scr C}      \def\aaa{\mathbf{r}}     \def\r{r}
\def\gap{\text{\rm{gap}}} \def\prr{\pi_{{\bf m},\varrho}}  \def\r{\mathbf r}
\def\Z{\mathbb Z} \def\vrr{\varrho} \def\ll{\lambda}
\def\L{\scr L}\def\Tt{\tt} \def\TT{\tt}\def\II{\mathbb I}
\def\i{{\rm in}}\def\Sect{{\rm Sect}}  \def\H{\mathbb H}
\def\M{\scr M}\def\Q{\mathbb Q} \def\texto{\text{o}} \def\LL{\Lambda}
\def\Rank{{\rm Rank}} \def\B{\scr B} \def\i{{\rm i}} \def\HR{\hat{\R}^d}
\def\to{\rightarrow}\def\l{\ell}\def\iint{\int}
\def\EE{\scr E}\def\Cut{{\rm Cut}}
\def\A{\scr A} \def\Lip{{\rm Lip}}
\def\BB{\scr B}\def\Ent{{\rm Ent}}\def\L{\scr L}
\def\R{\mathbb R}  \def\ff{\frac} \def\ss{\sqrt} \def\B{\mathbf
B}
\def\N{\mathbb N} \def\kk{\kappa} \def\m{{\bf m}}
\def\dd{\delta} \def\DD{\Delta} \def\vv{\varepsilon} \def\rr{\rho}
\def\<{\langle} \def\>{\rangle} \def\GG{\Gamma} \def\gg{\gamma}
  \def\nn{\nabla} \def\pp{\partial} \def\E{\mathbb E}
\def\d{\text{\rm{d}}} \def\bb{\beta} \def\aa{\alpha} \def\D{\scr D}
  \def\si{\sigma} \def\ess{\text{\rm{ess}}}
\def\beg{\begin} \def\beq{\begin{equation}}  \def\F{\scr F}
\def\Ric{\text{\rm{Ric}}} \def\Hess{\text{\rm{Hess}}}
\def\e{\text{\rm{e}}} \def\ua{\underline a} \def\OO{\Omega}  \def\oo{\omega}
 \def\tt{\tilde} \def\Ric{\text{\rm{Ric}}}
\def\cut{\text{\rm{cut}}} \def\P{\mathbb P} \def\ifn{I_n(f^{\bigotimes n})}
\def\C{\scr C}      \def\aaa{\mathbf{r}}     \def\r{r}
\def\gap{\text{\rm{gap}}} \def\prr{\pi_{{\bf m},\varrho}}  \def\r{\mathbf r}
\def\Z{\mathbb Z} \def\vrr{\varrho} \def\ll{\lambda}
\def\L{\scr L}\def\Tt{\tt} \def\TT{\tt}\def\II{\mathbb I}
\def\i{{\rm in}}\def\Sect{{\rm Sect}}  \def\H{\mathbb H}
\def\M{\scr M}\def\Q{\mathbb Q} \def\texto{\text{o}} \def\LL{\Lambda}
\def\Rank{{\rm Rank}} \def\B{\scr B} \def\i{{\rm i}} \def\HR{\hat{\R}^d}
\def\to{\rightarrow}\def\l{\ell}
\def\8{\infty}\def\I{1}\def\U{\scr U}
\maketitle

\begin{abstract} %By using the Banach fixed point theorem, the strong well-posedness for path-distribution dependent stochastic Hamiltonian systems is obtained, where the drift is H\"{o}lder-Dini continuous in the space variable and Lipschitz continuous in the measure variable with respect to the Wasserstein distance. Moreover, the propagation chaos is obtained.

By Girsanov's thoerem and using the existing log-Harnack inequality for distribution independent SDEs, the log-Harnack inequality is derived for path-distribution dependent stochastic Hamiltonian systems. As an application, the exponential ergodicity in relative entropy is obtained by combining with transportation cost inequality. In addition, the quantitative propagation of chaos in the sense of Wasserstein distance, which together with the coupling by change of measure implies the quantitative propagation of chaos in total variation norm as well as relative entropy are obtained.
\end{abstract} \noindent
 AMS subject Classification:\  60H10, 60H15.   \\
\noindent
 Keywords: Stochastic Hamiltonian system, Path-Distribution dependent, Exponential ergodicity, Log-Harnack inequality, Propagation of chaos.
 \vskip 2cm

\section{Introduction}
The stochastic Hamiltonian system (SHS), which includes the kinetic
Fokker-Planck equation (see \cite{V}), has been extensively investigated in \cite{BWY,FF,GW,HLF,W2,WZ1,Z1,Z2} and references therein. More precisely, \cite{FF} has studied the regularity of stochastic kinetic equations; \cite{GW} investigated Bismut formula, gradient estimate and Harnack inequality for SHS by using coupling by change of measure; the derivative formula is extended to the case that the degenerate part is not linear by using Malliavin calculus in \cite{WZ1} and \cite{Z1}; moreover, \cite{Z1} derived the stochastic flows for SHS with linear degenerate part, and the diffusion only depends on the degenerate part; see also \cite{Z2} for the results on the stochastic flows with singular coefficients; we refer to \cite{W2} for the hypercontractivity for SHS. For the path-dependent SHS, the derivative formula and Harnack inequality are established in \cite{BWY}, see also \cite{HLF} for Harnack inequalities with singular drifts.

Recently, along with the application in nonlinear Fokker-Planck-Kolmogorov equations, McKean-Vlasov stochastic differential equations (SDEs), presented in \cite{M}, have gained much attention. There are plentiful results on these type SDEs, see for instance, \cite{BR1,BR2,CN,HX,L,RZ,Z5} and references therein.
In \cite{RW}, the exponential ergodicity of McKean-Vlasov SDEs in relative entropy is derived by log-Harnack inequality and transportation cost inequality. The log-Harnack inequality for non-degenerate McKean-Vlasov SDEs is investigated in \cite{FYW1} by coupling by change of measure. One can also refer to \cite{HRW} for the log-Harnack inequality of non-degenerate McKean-Vlasov SDEs with memory. In addition, there are lots of references on the well-posedness of McKean-Vlasov SDEs with singular coefficients, for instance, \cite{CN,HX,HS,L,MV,RZ,Z5} and references therein. Since in this paper we do not plan to pay much attention in the well-posedness for McKean-Vlasov SDEs with singular coefficients, we will not characterize the details of the well-posedness results in the above references and we will give the well-posedness result using the appendix in Section 5. %However, we should remark that when the diffusion is degenerate, the first author and his coauthors are paying attention to investigating the parameter method of heat kernel, which is crucial in \cite{CN} to derive the well-posedness for McKean-Vlasov SDEs with distribution dependent non-degenerate diffusion and drift being Lipchitz continuous in the measure component under total variation norm.
%In the present paper, the drift is assumed to be Lipschitz continuous under Wasserstein distance instead of (weighted) total variation distance with respect to the measure component when the diffusion is distribution dependent.

To obtain the log-Harnack inequality for the path-distribution dependent SHS, we will adopt  the Girsanov's transform and combine with the existing log-Harnack  inequality in \cite{Wbook} and \cite{HLF}.

McKean-Vlasov SDEs can be viewed as the limit of the interacting particle system. The so called propagation of chaos (\cite{SZ}) means that the joint distribution of finite many particles converges to the product of the distribution of McKean-Vlasov SDEs as the number of interacting particle system tends to infinity, see \cite[Definition 4.1]{GKMPPT} for more details. For the propagation of chaos, \cite{L} obtain the convergence of the interacting particle system with non-degenerate noise in the total variation distance. In this paper,  we obtain the convergence of the interacting particle system in the sense of Wasserstein distance, total variation norm and relative entropy, see Theorem \ref{POC} below. Since $\C^{m+d}$ is an infinite dimensional space, to obtain the quantitative propagation of chaos, we assume that the coefficients are Lipschitz continuous in $\W_\theta^\Gamma$ instead of $L^\theta$-Wasserstein distance. For more results on the propagation of chaos, see \cite{AZ,BO,G,GKMPPT,JR,NT,SZ} and references therein.

The main contributions of this paper mainly include:
(1) The diffusion is degenerate.
(2) The model is assumed to be both path and distribution dependent.
(3) The quantitative propagation of chaos in the sense of total variation norm and relative entropy is obtained.

The paper is organized as follows: In Section 2, we prove the log-Harnack inequality for path-distribution dependent SHS; The exponential ergodicity in relative entropy is derived in Section 3, where the transportation cost inequality for the invariant probability measure is also investigated under the dissipative condition; In section 4, the quantitative propagation of chaos for path-distribution dependent SHS is studied. Finally, the well-posedness for general  path-distribution dependent SDEs and mean field interacting particle system is provided in Section 5.

%Throughout the paper, the letter $C$ or $c$ will denote a positive constant, and $C(\theta)$ or $c(\theta)$ stands for a constant depending on $\theta$. The value of the constants may change from one appearance to another.

Throughout the paper, fix a constant $r> 0$. For any $n\in\mathbb{N}^{+}$, let $\C^{n}= C([-r,0];\mathbb{R}^{n})$ be equipped with the uniform norm $\|\xi\|_\infty =:\sup_{s\in[-r,0]} |\xi(s)|$. For any $f\in C([-r,\infty);\mathbb{R}^{n})$, $t\geq 0$, define $f_t \in \C^{n}$ as $f_t(s)=f(t+s), s\in [-r,0]$, which is called the segment process. Let $\scr P(\C^{n})$ be the set of all probability measures in $\C^{n}$ equipped with the weak topology.
For $\theta\geq 1$, define
$$\scr P_\theta(\C^n) = \big\{\mu\in \scr P(\C^n): \mu(\|\cdot\|_\infty^\theta)<\infty\big\}.$$    It is well known that
$\scr P_\theta(\C^n)$ is a Polish space under the Wasserstein distance
$$\W_\theta(\mu,\nu):= \inf_{\pi\in \mathbf{C}(\mu,\nu)} \bigg(\int_{\C^n\times\C^n} \|\xi-\eta\|_\infty^\theta \pi(\d \xi,\d \eta)\bigg)^{\ff 1 {\theta}},\ \ \mu,\nu\in \scr P_{\theta}(\C^n),$$ where $\mathbf{C}(\mu,\nu)$ is the set of all couplings of $\mu$ and $\nu$.

Recall that for two probability measures $\mu,\nu$ on   some measurable space $(E,\scr E)$, the entropy and total variation norm are defined as follows:
$$\Ent(\nu|\mu):= \beg{cases} \int_E (\log \ff{\d\nu}{\d\mu})\d\nu, \ &\text{if}\ \nu\ \text{ is\ absolutely\ continuous\ with\ respect\ to}\ \mu,\\
 \infty,\ &\text{otherwise;}\end{cases}$$ and
$$\|\mu-\nu\|_{var} := \sup_{|f|\leq 1}|\mu(f)-\nu(f))|.$$ By Pinsker's inequality (see \cite{Pin}),
\beq\label{ETX} \|\mu-\nu\|_{var}^2\le 2 \Ent(\nu|\mu),\ \ \mu,\nu\in \scr P(E),\end{equation}
here $\scr P(E)$ denotes all probability measures on $(E,\scr E)$.
Throughout the paper, we will use $C$ or $c$ as a constant, the values of which may change from one place to another. For $n,k\in\mathbb{N}^+$, let $0_{n}$ and $0_{n\times k}$ denote the $n$ dimensional vector and $n\times k$ matrix with all components being $0$.
\section{Log-Harnack Inequality}
The log-Harnack inequality provides an estimate of the relative entropy for two probability measures, see for instance \cite[Theorem 1.4.2 (2)]{Wbook}. For the path-dependent SHS, the log-Harnack inequality has been established in \cite[Theorem 4.4.5]{Wbook}, see also \cite{HLF} for the case with singular drifts. \cite{HRW} studied log-Harnack inequality for path-distribution dependent SDEs with non-degenerate noise and the result is extended to the path-dependent SDEs with singular drift in \cite{HX}. Moreover, by Girsanov's transform and Young's inequality, the log-Harnack inequality is obtained in \cite{HS}, where the semi-linear SPDE with Dini continuous drift and non-degenerate noise is considered. In this section, we extend the method in \cite{HS} to the path-distribution dependent case including the path-distribution SHS. To this end, we first give a general result as follows.
\subsection{A General Result}
Let $T>r$ and $n,k\in \mathbb{N}^+$. Consider SDE on $\R^n$:
\begin{align}\label{GPSEQ00}\d X(t)=H_0(t,X_t)\d t+\Sigma(t,X_t)H(t,X_t,\L_{X_t})\d t+\Sigma(t,X_t)\d W(t),
\end{align}
where $H_0:[0,\infty)\times\scr C^n\to \mathbb{R}^n, H:[0,\infty)\times\scr C^n\times \scr P(\C^n)\to \mathbb{R}^k$, $\Sigma:[0,\infty)\times\C^n\to\mathbb{R}^{n}\otimes\mathbb{R}^{k}$ are measurable and $W(t)$ is a $k$-dimensional Brownian motion on some complete filtration probability space $(\Omega, \scr F, (\scr F_t)_{t\geq 0},\P)$.

Let $\hat{\scr P}(\C^n)$ be a subset of $\scr P(\C^n)$ containing all Dirac measures and it is equipped with some topology. Assume that \eqref{GPSEQ00} is well-posed in $\hat{\scr P}(\C^n)$, see Definition \ref{defws} and Theorem \ref{GWP} for general result on the well-posedness of path-distribution depedndent SDEs. For any $\mu_0\in\hat{\scr P}(\C^n)$, let $X^{\mu_0}_t$ be the unique solution to \eqref{GPSEQ00} with initial distribution $\mu_0$ and define
\begin{align}\label{seg}P_t f(\mu_0)=(P_t^\ast\mu_0) (f)=\E f(X^{\mu_0}_t),\ \ f\in \scr B_b(\C^n),t\geq 0.\end{align}
For any $\mu\in C([0,T];\hat{\scr P}(\C^n))$ and any $\F_0$-measurable random variable $X_0$ with $\L_{X_0}\in\hat{\scr P}(\C^n)$, suppose that the decoupled SDE
\begin{align}\label{GPS000}\d X^{X_0,\mu}(t)=H_0(t,X^{X_0,\mu}_t)\d t+\Sigma(t,X^{X_0,\mu}_t)H(t,X^{X_0,\mu}_t,\mu_t)\d t+\Sigma(t,X^{X_0,\mu}_t)\d W(t)
\end{align}
with $X^{X_0,\mu}_0=X_0$ has a unique strong solution. Note that \eqref{GPS000} reduces to a path dependent classical SDE, see \cite{HLF,WZ,Z2} and references therein for the well-posedness with singular coefficients. Let $P_t^\mu$ be the associated semigroup to \eqref{GPS000}, i.e.
$$P_t^\mu f(\xi)=\E f(X^{\xi,\mu}_t),\ \ \xi\in\C^n, f\in \scr B_b(\C^n), t\geq 0. $$
For $\nu\in C([0,T];\hat{\scr P}(\C^n))$, let
\begin{align*}
&\zeta_t^{\mu,\nu}=H(t,X^{X_0,\mu}_t,\mu_t)-H(t,X^{X_0,\mu}_t,\nu_t),\\ &R_t^{\mu,\nu}=\exp\left\{-\int_{0}^t\<\zeta^{\mu,\nu}_s,\d W(s)\>-\frac{1}{2}\int_{0}^t|\zeta^{\mu,\nu}_s|^2\d s\right\},\ \ t\in[0,T].
\end{align*}
\begin{thm}\label{log11}
Assume that for any $\mu,\nu\in C([0,T],\hat{\scr P}(\C^n))$, $\{R_t^{\mu,\nu}\}_{t\in[0,T]}$ is a martingale and $P_t^\nu$ satisfies the log-Harnack inequality, i.e. there exists a function $C:(r,\infty)\to(0,\infty)$ such that for any $f\in \scr B_{b}(\C^n)$ with $f>0$
\begin{align}\label{log00}P_t^\nu\log f(\xi)\leq \log P_t^\nu f(\eta)+ C(t)\|\xi-\eta\|_\infty^2,\ \ r<t\leq T, \xi,\eta\in\C^n.
\end{align}
Then we have
\begin{align}\label{lognn}P_t\log f(\nu_0)\leq \log P_tf(\mu_0) + 2C(t)\W_2(\mu_0,\nu_0)^2+\log \E (R_t^{\mu,\nu})^2,\ \ r<t\leq T.
\end{align}
Consequently,
$$\frac{1}{2}\|P_t^\ast\mu_0-P_t^\ast\nu_0\|_{var}^2\leq\Ent(P_t^\ast\mu_0|P_t^\ast\nu_0)\leq 2C(t)\W_2(\mu_0,\nu_0)^2+\log \E (R_t^{\mu,\nu})^2,\ \ r<t\leq T.$$
\end{thm}
 \begin{proof}
 By \cite[Theorem 1.4.2 (2)]{Wbook} and \eqref{ETX}, it is sufficient to prove the log-Harnack inequality \eqref{lognn}.

 Let $\mu_t=P_t^\ast \mu_0$ and $\nu_t=P_t^\ast \nu_0$, $\bar{W}(t)=W(t)+\int_0^t\zeta_s^{\mu,\nu}\d s$, $t\in[0,T]$. Since $\{R_t^{\mu,\nu}\}_{t\in[0,T]}$ is a martingale, it follows from Girsanov's theorem that $\{\bar{W}(t)\}_{t\in[0,T]}$ is a $k$-dimensional Brownian motion under $\Q_T=R_T^{\mu,\nu}\P$.
 So, \eqref{GPS000} can be rewritten as
\begin{align*}
\d X^{X_0,\mu}(t)=H_0(t,X^{X_0,\mu}_t)+\Sigma(t,X^{X_0,\mu}_t)H(t,X^{X_0,\mu}_t,\nu_t)\d t+\Sigma(t,X^{X_0,\mu}_t)\d\bar{ W}(t),\ \ X^{X_0,\mu}_0=X_0.
\end{align*}
 Letting $\bar{\mu}_t=\L_{X^{X_0,\mu}_t}|\Q_T$ and noting that $\{R_t^{\mu,\nu}\}_{t\in[0,T]}$ is a martingale, we derive
\begin{align*}\bar{\mu}_t(f)=\E^{\Q_T}f(X^{X_0,\mu}_t)=\E(R_t^{\mu,\nu}f(X^{X_0,\mu}_t)),\ \ f\in\B_b(\C^n), t\in[0,T],
\end{align*}
which implies that $\P$-a.s.
\begin{align*}\frac{\d\bar{\mu}_t}{\d\mu_t}(X^{X_0,\mu}_t)=\E(R_t^{\mu,\nu}|X^{X_0,\mu}_t),\ \ t\in[0,T].
\end{align*}
By Jensen's inequality for conditional expectation, we get
\begin{align}\label{RTQ}
\bar{\mu}_t\left(\frac{\d \bar{\mu}_t}{\d \mu_t}\right)&=\E(\E(R_t^{\mu,\nu}|X^{X_0,\mu}_t)^2)\leq \E(R_t^{\mu,\nu})^2,\ \ t\in[0,T].
\end{align}
%Next, consider
%\beq\label{EC00} \d Y_t= AY_t+b(Y_t,\nu_t)+Q_t(Y_t) \d \bar{W}_t\end{equation} with
%$\L_{Y_0}=\nu_0$.
On the other hand, taking expectation in \eqref{log00} with respect to any $\pi\in\C(\nu_0,\mu_0)$ and using Jensen's inequality and then taking infimum in $\pi\in\C(\nu_0,\mu_0)$, we get
$$(P_t^\ast\nu_0)(\log f)\leq \log \bar{\mu}_t(f)+ C(t)\W_2(\mu_0,\nu_0)^2,\ \ r<t\leq T.$$
This together with \cite[Theorem 1.4.2 (2)]{Wbook} implies that
$$\mathrm{Ent}(P_t^\ast\nu_0|\bar{\mu}_t)=\bar{\mu}_t\left(\frac{\d P_t^\ast\nu_0}{\d \bar{\mu}_t}\log\frac{\d P_t^\ast\nu_0}{\d \bar{\mu}_t}\right)\leq C(t)\W_2(\mu_0,\nu_0)^2.$$
It follows from Young's inequality and \eqref{RTQ} that
\begin{align*}P_t\log f(\nu_0)
&=\mu_t\left(\frac{\d \bar{\mu}_t}{\d \mu_t}\frac{\d P_t^\ast\nu_0}{\d \bar{\mu}_t}\log f\right)\\
&\leq \log P_tf(\mu_0)+\mu_t\left(\frac{\d \bar{\mu}_t}{\d \mu_t}\frac{\d P_t^\ast\nu_0}{\d \bar{\mu}_t}\log\left(\frac{\d \bar{\mu}_t}{\d \mu_t}\frac{\d P_t^\ast\nu_0}{\d \bar{\mu}_t}\right)\right)\\
&= \log P_t f(\mu_0)+\bar{\mu}_t\left(\frac{\d P_t^\ast\nu_0}{\d \bar{\mu}_t}\log\frac{\d \bar{\mu}_t}{\d \mu_t}\right)+\bar{\mu}_t\left(\frac{\d P_t^\ast\nu_0}{\d \bar{\mu}_t}\log\frac{\d P_t^\ast\nu_0}{\d \bar{\mu}_t}\right)\\
&\leq \log P_t f(\mu_0)+\log \bar{\mu}_t\left(\frac{\d \bar{\mu}_t}{\d \mu_t}\right)+2\bar{\mu}_t\left(\frac{\d P_t^\ast\nu_0}{\d \bar{\mu}_t}\log\frac{\d P_t^\ast\nu_0}{\d \bar{\mu}_t}\right)\\
&\leq \log P_tf(\mu_0)+\log\E(R_t^{\mu,\nu})^2+2C(t)\W_2(\mu_0,\nu_0)^2.
\end{align*}
Therefore, we complete the proof.
 \end{proof}
\subsection{Log-Harnack Inequality and Regularity for Path-Distribution Dependent SHS}
Let $m,d\in\mathbb{N}^{+}$.
In this section, consider the following path-distribution dependent stochastic Hamiltonian system on $\mathbb{R}^{m+d}$:
\beq\label{EH}
\begin{cases}
\d X(t)=\{AX(t)+MY(t)\}\d t, \\
\d Y(t)=\{Z(X(t),Y(t),\L_{(X_t,Y_t)})+B(X_t,Y_t,\L_{(X_t,Y_t)})\}\d t+\sigma\d W(t),
\end{cases}
\end{equation}
where $W=(W(t))_{t\geq 0}$ is a $d$-dimensional standard Brownian motion with respect to a complete filtration probability space $(\OO, \F, \{\F_{t}\}_{t\ge 0}, \P)$, $A$ is an $m\times m$ matrix, $M$ is an $m\times d$ matrix, $\sigma$ is a $d\times d$ matrix, $Z:\mathbb{R}^{m+d}\times \scr P(\C^{m+d})\to \mathbb{R}^d$, $B:\C^{m+d}\times \scr P(\C^{m+d})\to\mathbb{R}^d$. We should remark that the reason why we assume that the coefficients are time independent is only to coincide with the assertion in Section 3 and the result in Theorem \ref{T3.2} can also be available in the time dependent case.
To obtain the log-Harnack inequality, we make the following assumptions:
\begin{enumerate}
\item[\bf{(A1)}] $\sigma$ is invertible.
\item[\bf{(A2)}] There exists $\theta\geq 1$ and $K_Z>0$ such that
\begin{equation*}
|Z(z,\gamma)-Z(\bar{z},\bar{\gamma})|\leq K_Z(|z-\bar{z}|+\W_\theta(\gamma,\bar{\gamma})),\ \ z,\bar{z}\in\mathbb{R}^{m+d},\gamma,\bar{\gamma}\in\scr P_\theta(\C^{m+d}).
\end{equation*}
\item[\bf{(A3)}] Let $\theta$ be in {\bf{(A2)}}. There exists a constant $K_B>0$ such that
\begin{equation*}
|B(\xi,\gamma)-B(\eta,\bar{\gamma})|\leq K_B(\|\xi-\eta\|_{\infty}+\W_\theta(\gamma,\bar{\gamma})),\ \ \xi,\eta\in\C^{m+d},\gamma,\bar{\gamma}\in\scr P_\theta(\C^{m+d}).
\end{equation*}
\item[\bf{(A4)}] There exists an integer $0\leq l\leq m-1$ such that
$$\mathrm{Rank}[M,AM,\cdots,A^lM]=m.$$
\end{enumerate}

According to Remark \ref{Rem} below, under {\bf(A1)}-{\bf(A3)}, \eqref{EH} is well-posed in $\scr P_\theta(\C^{m+d})$. Denote the solution to \eqref{EH} with $\L_{(X_0,Y_0)}=\mu_0\in\scr P_{\theta}(\C^{m+d})$ by $(X_t^{\mu_0},Y_t^{\mu_0})$.
Let $P_t$ and $P_t^\ast$ be defined in the same way as in \eqref{seg} for $(X_t^{\mu_0},Y_t^{\mu_0})$ replacing $X_t^{\mu_0}$ there.
The next result characterizes the log-Harnack inequality for \eqref{EH}.
\begin{thm}\label{T3.2} Assume
  {\bf (A1)}-{\bf (A4)} and let $t>r$. Then for any $\mu_0,\nu_0\in \scr P_\theta(\C^{m+d})$ and positive
$f\in \B_b(\C^{m+d})$,
\beg{equation*}\beg{split}
P_t\log f(\mu_0)&\leq\log P_t f(\nu_0)+C^2\int_0^t\e^{2Cs}\d s\W_\theta(\mu_0,\nu_0)^2+\Sigma(t,r,\|M\|,l)\W_2(\mu_0,\nu_0)^2,\end{split}\end{equation*}
where
\beg{equation*}\beg{split}
\nonumber\Sigma(t,r,\|M\|,l)&=C\left\{\Big(\ff 1 {(t-r)\wedge1}+ \ff{\|M\|}{(t-r)^{(4l+3)}\wedge1}\Big)+\Big(1+ \ff{\|M\|}{(t-r)^{2l+1}\wedge1}\Big)^2\right\},
\end{split}\end{equation*}
and $C>0$ is a constant. Consequently, it holds
\begin{align}\label{entro}
\nonumber
&\frac{1}{2}\|P_t^\ast\mu_0-P_t^\ast\nu_0\|_{var}^2\leq\mathrm{Ent}(P_t^\ast \mu_0|P_t^\ast \nu_0)\\
&\leq C^2\int_0^t\e^{2Cs}\d s\W_\theta(\mu_0,\nu_0)^2+\Sigma(t,r,\|M\|,l)\W_2(\mu_0,\nu_0)^2.
\end{align}
 \end{thm}
 \begin{proof}
Let $n=m+d$, $k=d$,
$$H_0(x,y)=\left(
        \begin{array}{c}
          Ax+My \\
           0_d\\
        \end{array}
      \right),\ \
 H=\sigma^{-1}(Z+B), \ \ \Sigma=\left(
                                                \begin{array}{c}
                                                  0_{m\times d} \\
                                                  \sigma \\
                                                \end{array}
                                              \right),
\ \ x\in\R^m,y\in\R^d.$$
Let $\mu_t=P_t^\ast\mu_0$ and $\nu_t=P_t^\ast\nu_0$. For simplicity, we denote $(X_s,Y_s)=(X_s^{\mu_0}, Y_s^{\mu_0})$. Set $$\zeta^{\mu,\nu}_s=\sigma^{-1}[Z(X(s),Y(s),\mu_s)+B(X_s,Y_s,\mu_s)-Z(X(s),Y(s),\nu_s)-B(X_s,Y_s,\nu_s)],$$
By {\bf(A2)}-{\bf(A3)} and Remark \ref{Rem} below, there exists a constant $C>0$ such that
\begin{align*}|\zeta^{\mu,\nu}_s|\leq \|\sigma^{-1}\|(K_Z+K_B)\W_\theta(\mu_s,\nu_s)\leq C\e^{Cs}\W_\theta(\mu_0,\nu_0),\ \ s\in[0,t].
\end{align*}
Recalling the definition of $R_t^{\mu,\nu}$ in Theorem \ref{log11}, we arrive at
$$\log \E (R_t^{\mu,\nu})^2\leq \log \mathrm{ess sup}_{\Omega}\e^{\int_0^t|\zeta^{\mu,\nu}_s|^2\d s}\leq \int_0^tC^2\e^{2Cs}\W_\theta(\mu_0,\nu_0)^2\d s.$$
On the other hand, by \cite[Theorem 4.4.5]{Wbook}, we know
\begin{align*}
P_t^\nu\log f(\xi)\leq \log P_t^\nu f(\eta)+ \Sigma(t,r,\|M\|,k)\|\xi-\eta\|_\infty^2.
\end{align*}
So, applying Theorem \ref{log11}, we complete the proof.
 \end{proof}
\section{Exponential Ergodicity}
In this section, we investigate the exponential ergodicity of \eqref{EH} in $L^2$-Wasserstein distance as well as in relative entropy.
To this end, we assume
\begin{enumerate}
\item[{\bf(C)}] There exist $\lambda_1>0, \lambda_2,\lambda_3\geq 0$ with $\lambda_2+\lambda_3< \sup_{\delta\in[0,\lambda_1]}\delta\e^{-\delta r}$ such that for any $\xi=(\xi^{(1)},\xi^{(2)}), \bar{\xi}=(\bar{\xi}^{(1)},\bar{\xi}^{(2)})\in \C^{m+d}, \gamma,\bar{\gamma}\in \scr P_{2}(\C^{m+d})$,
\begin{align*}&2\<A(\xi^{(1)}(0)-\bar{\xi}^{(1)}(0))+M(\xi^{(2)}(0)-\bar{\xi}^{(2)}(0)),\ \ \xi^{(1)}(0)-\bar{\xi}^{(1)}(0)\>, \\
&+2\<Z(\xi(0),\gamma)-Z(\bar{\xi}(0),\bar{\gamma})+B(\xi,\gamma)-B(\bar{\xi}, \bar{\gamma}),\ \ \xi^{(2)}(0)-\bar{\xi}^{(2)}(0)\>\\
&\leq -\lambda_1|\xi(0)-\bar{\xi}(0)|^2+\lambda_2\|\xi-\bar{\xi}\|_\infty^2+\lambda_3\W_2(\gamma,\bar{\gamma})^2.
\end{align*}
\end{enumerate}
\begin{thm}\label{EXPEN}
Assume ${\bf(C)}$ and {\bf (A1)}-{\bf (A4)}  with $\theta=2$.
Then $P_t^*$  has a unique  invariant probability measure $\mu^\ast\in \scr P_2(\C^{m+d})$ with
\begin{align*}&\max(\W_2(P_t^\ast\nu, \mu^\ast)^2,\mathrm{Ent}(P_t^\ast\nu|\mu^\ast))\\
&\leq c\e^{-2\kappa t}\min(\W_2(\nu, \mu^\ast)^2,\mathrm{Ent}(\nu|\mu^\ast)),\ \ \nu\in\scr P_2(\C^{m+d}),t>2r\end{align*}
for some constants $c,\kappa>0$.
\end{thm}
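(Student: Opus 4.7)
The plan is to combine a synchronous coupling argument based on assumption \textbf{(C)} with the log-Harnack inequality of Theorem \ref{T3.2} (where the strengthened form of \textbf{(A2)}--\textbf{(A3)} with $\tilde{\W}_\theta=\W_2$ makes the log-Harnack bound quadratic in $\W_2$) and a Talagrand $T_2$-transportation inequality for the invariant measure. First, let $(X_t,Y_t)$ and $(\tilde X_t,\tilde Y_t)$ be two solutions of \eqref{EH} driven by the same Brownian motion $W$, with initial segments coupled so as to realise $\W_2(\L_{(X_0,Y_0)},\L_{(\tilde X_0,\tilde Y_0)})$. Since the noise is additive the diffusion terms cancel in the difference equation; applying It\^o's formula to $|(X(t),Y(t))-(\tilde X(t),\tilde Y(t))|^2$, taking expectation and invoking \textbf{(C)}, one obtains, writing $f(t):=\E|(X(t),Y(t))-(\tilde X(t),\tilde Y(t))|^2$,
\begin{equation*}
\ff{\d}{\d t}f(t)\le -2\lambda_1 f(t)+2\lambda_2\E\|(X_t,Y_t)-(\tilde X_t,\tilde Y_t)\|_\infty^2+2\lambda_3\W_2(\L_{(X_t,Y_t)},\L_{(\tilde X_t,\tilde Y_t)})^2.
\end{equation*}

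Both $\W_2(\L_{(X_t,Y_t)},\L_{(\tilde X_t,\tilde Y_t)})^2$ and $\E\|(X_t,Y_t)-(\tilde X_t,\tilde Y_t)\|_\infty^2$ are dominated by $\sup_{s\in[t-r,t]}f(s)$, so the display above reduces to the scalar delay inequality $f'(t)\le -2\lambda_1 f(t)+2(\lambda_2+\lambda_3)\sup_{s\in[t-r,t]}f(s)$. The borderline hypothesis $\lambda_2+\lambda_3\le\sup_{\delta\in[0,\lambda_1]}\delta\e^{-\delta r}$ is precisely what is needed so that, testing $g(t):=\e^{2\kappa t}f(t)$, one can choose $\kappa>0$ for which $g$ stays bounded, yielding $f(t)\le C\e^{-2\kappa t}\sup_{s\in[-r,0]}f(s)$. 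Taking the infimum over initial couplings produces the $\W_2$-contraction $\W_2(P_t^*\mu,P_t^*\nu)^2\le C\e^{-2\kappa t}\W_2(\mu,\nu)^2$, whence a standard Cauchy argument in $(\scr P_2(\C),\W_2)$ gives the existence and uniqueness of an invariant probability $\mu\in\scr P_2(\C)$, together with the $\W_2^2$--versus--$\W_2^2$ branch of the target estimate.

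To convert the $\W_2$-contraction into a bound on the relative entropy, fix $T_0>r$ and use the semigroup identity $P_t^*\nu=P_{T_0}^*(P_{t-T_0}^*\nu)$ together with $P_{T_0}^*\mu=\mu$. Under the present hypotheses, Theorem \ref{T3.2} simplifies to $\Ent(P_{T_0}^*\rho_1|P_{T_0}^*\rho_2)\le C(T_0)\W_2(\rho_1,\rho_2)^2$, and combining it with the $\W_2$-contraction produces
\begin{equation*}
\Ent(P_t^*\nu|\mu)=\Ent\bigl(P_{T_0}^*(P_{t-T_0}^*\nu)\,\big|\,P_{T_0}^*\mu\bigr)\le C(T_0)\W_2(P_{t-T_0}^*\nu,\mu)^2\le K(t)\e^{-2\kappa t}\W_2(\nu,\mu)^2,
\end{equation*}
which is the $\Ent$--versus--$\W_2^2$ branch.

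To replace $\W_2(\nu,\mu)^2$ by $\Ent(\nu|\mu)$ on the right-hand side of the two bounds above, I would invoke a Talagrand $T_2$-transportation inequality $\W_2(\rho,\mu)^2\le C_{T_2}\Ent(\rho|\mu)$ for the invariant measure $\mu$; this is exactly the transportation inequality mentioned in the abstract, and it can be obtained from the log-Harnack estimate combined with the $\W_2$-contraction along the ergodic flow, in the spirit of \cite{RW}. Substituting this $T_2$-bound into the previous two displays yields the remaining two estimates, and assembling them gives the $\max$--$\min$ form stated in the theorem. I anticipate two main obstacles: (i) the sharp delay analysis of the second step, since the borderline condition $\lambda_2+\lambda_3\le \sup_{\delta\in[0,\lambda_1]}\delta\e^{-\delta r}$ leaves little room for the choice of $\kappa$, and (ii) verifying the Talagrand $T_2$-inequality for the degenerate, path-dependent invariant measure $\mu$.
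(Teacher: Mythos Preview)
Your overall architecture coincides with the paper's: derive $\W_2$-exponential contraction from \textbf{(C)} via synchronous coupling, obtain the invariant measure, then combine the log-Harnack inequality of Theorem~\ref{T3.2} with a Talagrand $T_2$-inequality for $\mu$ to assemble the four-way estimate. The paper compresses exactly these steps by citing \cite[Remark~2.1]{HRW} for the $\W_2$-contraction, \cite{BWY} for the $T_2$-inequality, and \cite[Theorem~2.1]{RW} for the final assembly you carry out by hand.

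There is, however, a concrete error in your delay-inequality reduction. You assert that both $\E\|(X_t,Y_t)-(\tilde X_t,\tilde Y_t)\|_\infty^2$ and $\W_2(\L_{(X_t,Y_t)},\L_{(\tilde X_t,\tilde Y_t)})^2$ are dominated by $\sup_{s\in[t-r,t]}f(s)$ with $f(s)=\E|Z(s)|^2$, $Z=(X,Y)-(\tilde X,\tilde Y)$. But by Jensen $\E\sup_{s\in[t-r,t]}|Z(s)|^2\ge\sup_{s\in[t-r,t]}\E|Z(s)|^2$, so the inequality goes the wrong way and the scalar delay ODE $f'(t)\le-2\lambda_1 f(t)+2(\lambda_2+\lambda_3)\sup_{[t-r,t]}f$ does not follow from what you wrote. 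The remedy (which is what underlies \cite[Remark~2.1]{HRW}) is to use that the additive noise cancels: $t\mapsto|Z(t)|^2$ satisfies a \emph{pathwise} Halanay-type delay inequality with deterministic forcing $2\lambda_3\W_2(\mu_t,\nu_t)^2$; one first handles the $\lambda_2$-term by the pathwise Halanay lemma, then takes expectations and uses $\W_2(\mu_t,\nu_t)^2\le\E\|Z_t\|_\infty^2$ to close a Gr\"onwall loop on the $\lambda_3$-term. This is also why the borderline condition in \textbf{(C)} is stated for the sum $\lambda_2+\lambda_3$.

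Regarding your obstacle~(ii): the $T_2$-inequality here is not derived from log-Harnack plus contraction as you suggest---in \cite{RW} the transportation inequality is an \emph{input} to the scheme, not an output. The paper simply imports it from \cite{BWY} for the degenerate functional setting, so you need not try to manufacture it from the other ingredients.
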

\begin{proof} By \cite[Remark 2.1]{HRW}, {\bf (C)} implies that there exist constants $c_0,\kappa>0$ such that
$$\W_2(P_t^\ast\mu_0,P_t^\ast \nu_0)\leq c_0\e^{-\kappa t}\W_2(\mu_0,\nu_0),\ \ \mu_0,\nu_0\in\scr P_2(\C^{m+d}),t>0.$$
Then it is standard to prove that $P_t^*$  has a unique  invariant probability measure $\mu^\ast\in \scr P_2(\C^{m+d})$ with
\begin{align}\label{WCO}\W_2(P_t^\ast\nu, \mu^\ast)^2\leq c_0^2\e^{-2\kappa t}\W_2(\nu, \mu^\ast)^2, \ \ \nu\in\scr P_2(\C^{m+d}),t>0.\end{align}
Combining this with \eqref{entro} for $t=2r$ and \eqref{TCI} below, we complete the proof by using \cite[Theorem 2.1]{RW}.
\end{proof}
\subsection{Transportation cost inequality}
To obtain the exponential ergodicity in relative entropy, we also need to prove the transportation cost inequality for $\mu^\ast$. \cite{BWY13} give a proof of transportation cost inequality for the solution to path dependent SDEs starting from dirac measure and the technique used there is also available in the present case. Furthermore, under the dissipative condition {\bf(C)}, we can derive a uniform constant with respect to time variable $T$ in the transportation cost inequality for the solution to \eqref{EH} on $[0,T]$ starting from dirac measure, see \eqref{TCP} below. Then applying \cite[Lemma 2.1]{DGW} and \cite[Lemma 2.2]{DGW}, the stability of transportation cost inequality, $\mu^\ast$ satisfies the transportation cost inequality due to \eqref{WCO}.
\begin{thm}\label{TC3} Assume {\bf(C)}. Then the transportation cost  inequality holds for the invariant probability measure $\mu^\ast$, i.e.
\begin{align}\label{TCI}\W_2(\nu, \mu^\ast)^2\leq 2\e^{(\lambda_1-\epsilon)r} \frac{\|\sigma\|}{\epsilon}\mathrm{Ent}(\nu|\mu^\ast), \ \ \nu\in\scr P_2(\C^{m+d})\end{align}
with some constant $\epsilon\in(0,\lambda_1)$.
\end{thm}
\begin{proof} Consider
\beq\label{EH000}
\begin{cases}
\d X(t)=\{AX(t)+MY(t)\}\d t, \\
\d Y(t)=\{Z(X(t),Y(t),\mu^\ast)+B(X_t,Y_t,\mu^\ast)\}\d t+\sigma\d W(t).
\end{cases}
\end{equation}
For any $\xi\in\scr C^{m+d}$, $(X_t^{\xi,\mu^\ast},Y_t^{\xi,\mu^\ast})$ be the unique solution to \eqref{EH000} with initial value $\xi$. Let $P_t^{\mu^\ast}(\xi,\d\eta)=\L_{(X_t^{\xi,\mu^\ast},Y_t^{\xi,\mu^\ast})}(\d \eta)$.
According to {\bf(C)} and \cite[Remark 2.1]{HRW}, $\mu^\ast$ is the unique invariant probability measure of \eqref{EH000} and there exist constants $\tilde{c},\tilde{\kappa}>0$ such that
\begin{align}\label{ECR}\W_2(P_t^{\mu^\ast}(\xi,\cdot), \mu^\ast)\leq \tilde{c}\e^{-\tilde{\kappa} t}\W_2(\delta_\xi, \mu^\ast).
\end{align}
As in the proof of \cite[Lemma 2.2]{BWY13}, denote by $\Pi_\xi^T$ as the distribution of $(X_t^{\xi,\mu^\ast},Y_t^{\xi,\mu^\ast})_{t\in[0,T]}$.
Define the distance $$\rho_\infty^T(V,\tilde{V})=\sup_{t\in[0,T]}\|V_t-\tilde{V}_t\|_\infty,\ \ V, \tilde{V}\in C([0,T];\scr C^{m+d}).$$
Let $\alpha(\epsilon):=2\e^{(\lambda_1-\epsilon)r} \frac{\|\sigma\|}{\epsilon},\epsilon\in(0,\lambda_1)$. We claim that \cite[Lemma 2.2]{BWY13} holds for $\alpha(\epsilon)$ with some constant $\epsilon\in(0,\lambda_1)$ replacing $\alpha(T)$. To this end, it is sufficient to prove \cite[(14)]{BWY13} for $\alpha(\epsilon)$ with some constant $\epsilon\in(0,\lambda_1)$ instead of $\alpha(T)$, i.e.
\begin{align}\label{X-Y}\sup_{s\in[0,t]}\|X(s)-Y(s)\|_\infty^2\leq \e^{(\lambda_1-\epsilon)r} \frac{\|\sigma\|}{\epsilon}\int_0^th(s)^2\d s, \ \ t\geq 0.
\end{align}
In fact, it follows from It\^{o}'s formula and {\bf(C)} that
$$\d |X(t)-Y(t)|^2\leq \frac{\|\sigma\|}{\epsilon}h(t)^2\d t+(\epsilon-\lambda_1)|X(t)-Y(t)|^2\d t+\lambda_2\|X_t-Y_t\|_\infty^2\d t,\ \ \epsilon\in(0,\lambda_1).$$
So, we get
$$\d [\e^{(\lambda_1-\epsilon)t}|X(t)-Y(t)|^2]\leq \e^{(\lambda_1-\epsilon)t}\frac{\|\sigma\|}{\epsilon}h(t)^2\d t+\e^{(\lambda_1-\epsilon)t}\lambda_2\|X_t-Y_t\|_\infty^2\d t.$$
Let $\eta_t=\sup_{s\in[0,t]}\e^{(\lambda_1-\epsilon)s}|X(s)-Y(s)|$. It follows from $X_0=Y_0$ that
\begin{align*}\eta_t\leq \int_0^t\e^{(\lambda_1-\epsilon)s}\frac{\|\sigma\|}{\epsilon}h(s)^2\d s+\lambda_2\e^{(\lambda_1-\epsilon)r}\int_0^t\eta_s\d s.
\end{align*}
Gronwall's inequality implies that
\begin{align*}\eta_t&\leq \int_0^t\exp\{\lambda_2\e^{(\lambda_1-\epsilon)r}(t-s)\}\e^{(\lambda_1-\epsilon)s}\frac{\|\sigma\|}{\epsilon}h(s)^2\d s\\
&=\int_0^t\exp\{\lambda_2\e^{(\lambda_1-\epsilon)r}t\}\e^{[(\lambda_1-\epsilon)-\lambda_2\e^{(\lambda_1-\epsilon)r}]s}\frac{\|\sigma\|}{\epsilon}h(s)^2\d s.
\end{align*}
Noting that $\eta_t\geq \e^{(\lambda_1-\epsilon)(t-r)}\|X_t-Y_t\|_\infty$, we arrive at
$$\|X_t-Y_t\|_\infty^2\leq \e^{(\lambda_1-\epsilon)r} \frac{\|\sigma\|}{\epsilon}\int_0^t\e^{-\e^{(\lambda_1-\epsilon)r}[(\lambda_1-\epsilon)\e^{-(\lambda_1-\epsilon)r}-\lambda_2](t-s)}h(s)^2\d s.$$
Since $\lambda_2< \sup_{\delta\in[0,\lambda_1]}\delta\e^{-\delta r}$ and $\delta\to\delta\e^{-\delta r}$ is a continuous function, there exists a constant $\epsilon\in(0,\lambda_1)$ such that $(\lambda_1-\epsilon)\e^{-(\lambda_1-\epsilon)r}-\lambda_2>0$. In the following, we fix this $\epsilon$. We derive
$$\|X_t-Y_t\|_\infty^2\leq \e^{(\lambda_1-\epsilon)r} \frac{\|\sigma\|}{\epsilon}\int_0^th(s)^2\d s,\ \ t\geq 0$$
which gives \eqref{X-Y}. So, \cite[Lemma 2.2]{BWY13} holds for $\alpha(\epsilon)$ replacing $\alpha(T)$.
Therefore, by \cite[(7)]{BWY13} with $c_\mu=0$, the transportation cost inequality for $\Pi_\xi^T$ holds, i.e.
\begin{align}\label{TCP}\W_{2,\rho_\infty^T}(\nu^T,\Pi_\xi^T)^2\leq 2\e^{(\lambda_1-\epsilon)r} \frac{\|\sigma\|}{\epsilon}\Ent(\nu^T|\Pi_\xi^T)
\end{align}
for any probability measure $\nu^T$ on $C([0,T];\C^{m+d})$ with $\nu^T(\sup_{t\in[0,T]}\|v_t\|^2_\infty)<\infty$.

Define the projection mapping $\pi_T:C([0,T];\C^{m+d})\to\C^{m+d}$ as $\pi_T(v)=v_T,v\in C([0,T];\C^{m+d})$. Then by \eqref{TCP} and \cite[Lemma 2.1]{DGW} for $\Phi=\pi_T$, we obtain
$$\W_{2}(\nu,P^{\mu^\ast}_T(\xi,\cdot))^2\leq 2\e^{(\lambda_1-\epsilon)r} \frac{\|\sigma\|}{\epsilon}\Ent(\nu|P^{\mu^\ast}_T(\xi,\cdot)),\ \ \nu\in\scr P_2(\C^{m+d}).$$
Finally, in view of \eqref{ECR} and \cite[Lemma 2.2]{DGW}, we complete the proof.
\end{proof}
\section{Propagation of Chaos}
In this section, we consider path-distribution dependent SHS on $\mathbb{R}^{m+d}$:
\beq\label{E1}\begin{split}
\d X(t)&=\left(
                                  \begin{array}{c}
                                    b(t,X_t,\L_{X_t}) \\
                                    B(t,X_t,\L_{X_t}) \\
                                  \end{array}
                                \right)
\d t+\left(
       \begin{array}{c}
         0_{m\times d} \\
         \sigma(t,X_t,\L_{X_t}) \\
       \end{array}
     \right)
\d W(t), \\
\end{split}\end{equation}
where $W=(W(t))_{t\geq 0}$ is a $d$-dimensional standard Brownian motion with respect to a complete filtration probability space $(\OO, \F, \{\F_{t}\}_{t\ge 0}, \P)$, $b:[0,\infty)\times \C^{m+d}\times \scr P(\C^{m+d})\to \mathbb{R}^{m}$,
$B: [0,\infty)\times \C^{m+d}\times \scr P(\C^{m+d})\to \mathbb{R}^{d}$
and $\sigma: [0,\infty)\times \C^{m+d}\times \scr P(\C^{m+d})\to \mathbb{R}^{d}\otimes\mathbb{R}^{d}$ are measurable. Throughout this section, we fix $T>0$ and consider the solution for \eqref{E1} on time interval $[0,T]$.

Let $X_0$ be an $\F_0$-measurable $\C^{m+d}$-valued random variable,
$N\ge1$ be an integer and $(X_0^i,W^i(t))_{1\le i\le N}$ be i.i.d.\,copies of $(X_0,W(t)).$ Consider the following non-interacting particle system:
\begin{equation}\begin{split}\label{GP0}\d X^i(t)&=\left(
                                     \begin{array}{c}
                                       b(t,X^i_t,\mu_t^i) \\
                                       B(t,X^i_t,\mu_t^i) \\
                                     \end{array}
                                   \right)
\d t+\left(
       \begin{array}{c}
         0_{m\times d} \\
         \sigma(t,X^i_t,\mu_t^i) \\
       \end{array}
     \right)
\d W^i(t), \ \ 1\leq i\leq N,
\end{split}\end{equation}
where  $\mu_t^i:=\mathscr{L}_{X_t^i}$,
and the mean field interacting particle system
\begin{align}\label{GPS00}\d X^{i,N}(t)&=\left(
                                                 \begin{array}{c}
                                                   b(t,X^{i,N}_t,\hat\mu_t^N)\\
                                                   B(t,X^{i,N}_t,\hat\mu_t^N) \\
                                                 \end{array}
                                               \right)
\d t+\left(
                      \begin{array}{c}
                        0_{m\times d} \\
                        \sigma(t,X^{i,N}_t,\hat\mu_t^N) \\
                      \end{array}
                    \right)
\d W^i(t),\ \ X_0^{i,N}=X_0^i,
\end{align}
where  $\hat\mu_t^N$ is the empirical distribution of $X_t^{1,N},\cdots,X_t^{N,N}$, i.e.
\begin{equation*}
 \hat\mu_t^N =\ff{1}{N}\sum_{j=1}^N\dd_{X_t^{j,N}}.
 \end{equation*}

 To obtain the propagation of chaos, we make the following assumptions.
\beg{enumerate}
\item[\bf{(H)}] There exist constants $K>0$ and $\theta\geq 1$ such that the following conditions hold for all $t\in[0, T]$ and $\gamma\in\scr P_\theta(\C^{m+d})$:
\item[(H1)]For any $\xi,\eta\in\C^{m+d}$,
\begin{equation*}\beg{split}
&|b(t,\xi,\gamma)-b(t,\eta,\gamma)|+|B(t,\xi,\gamma)-B(t,\eta,\gamma)|+\|\sigma(t,\xi,\gamma)-\sigma(t,\eta,\gamma)\|\leq K\|\xi-\eta\|_{\infty}.
\end{split}\end{equation*}
\item[(H2)] For any $\xi\in \C^{m+d}$ and $\bar{\gamma},\tilde{\gamma}\in \scr P_\theta(\C^{m+d})$,
\begin{align*}
&|b(t,\xi,\bar{\gamma})-b(t,\xi,\tilde{\gamma})|+ \|\sigma(t,\xi,\bar{\gamma})-\sigma(t,\xi,\tilde{\gamma})\|+|B(t,\xi,\bar{\gamma})-B(t,\xi,\tilde{\gamma})|
\leq K\W_\theta(\bar{\gamma},\tilde{\gamma}),\\
&|b(t,0,\delta_0)|+|B(t,0,\delta_0)|+|\sigma(t,0,\delta_0)|\leq K.\end{align*}
\end{enumerate}
 Under {\bf (H)}, the well-posedness in $\scr P_{\theta}(\C^{m+d})$ for \eqref{E1} holds due to
Remark \ref{Rem} below, which means that $\mu_t^i$ in \eqref{GP0} does not depend on $i$ and we denote $\mu_t=\mu^i_t,t\in[0,T]$. Moreover, by Theorem \ref{wip} below, \eqref{GPS00} is also well-posed.

To prove the propagation of chaos, we need the following lemma, which may be a known result. Since we have not found some references, we give a brief proof in the following.
\begin{lem}\label{INT} Let $\{Z_i\}_{i\geq 1}$ be a sequence of i.i.d. non-negative random variables with $\E (Z_1)<\infty$. Then $\{\frac{1}{N}\sum_{i=1}^N Z_i\}_{N\geq 1}$ is uniformly integrable.
\end{lem}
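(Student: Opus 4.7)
The plan is to verify uniform integrability via the de la Vallée-Poussin criterion: a family of nonnegative random variables $\{Y_\alpha\}$ is uniformly integrable if and only if there exists a non-decreasing convex function $\phi:[0,\infty)\to[0,\infty)$ with $\lim_{x\to\infty}\phi(x)/x=\infty$ such that $\sup_\alpha \E\phi(Y_\alpha)<\infty$. The strategy is to manufacture such a $\phi$ from the single integrable variable $Z_1$ and then transfer the resulting moment bound to the Ces\`aro averages $Y_N:=\frac{1}{N}\sum_{i=1}^N Z_i$ by convexity.

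First I would invoke the easy (existence) half of the de la Vallée-Poussin theorem: since $\E Z_1<\infty$, there exists a non-decreasing convex function $\phi:[0,\infty)\to[0,\infty)$ with $\phi(0)=0$ and $\lim_{x\to\infty}\phi(x)/x=+\infty$ such that $\E\phi(Z_1)<\infty$. Next, applying Jensen's inequality to the convex $\phi$ gives pointwise
\[
\phi(Y_N)=\phi\!\left(\frac{1}{N}\sum_{i=1}^N Z_i\right)\leq \frac{1}{N}\sum_{i=1}^N\phi(Z_i),
\]
and taking expectations, together with the fact that the $Z_i$ are identically distributed, yields
\[
\E\phi(Y_N)\leq \frac{1}{N}\sum_{i=1}^N\E\phi(Z_i)=\E\phi(Z_1)<\infty,
\]
which is a bound uniform in $N$. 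The converse direction of de la Vallée-Poussin then immediately gives uniform integrability of $\{Y_N\}_{N\geq 1}$.

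There is no genuine obstacle in this argument; the only care required is in citing the existence statement of de la Vallée-Poussin (one may take, for instance, $\phi$ of the form $\phi(x)=\int_0^x g(s)\,\d s$ for a suitable non-decreasing $g$ tending to infinity, constructed from the tail of $Z_1$). Note that the independence hypothesis in the lemma is not actually needed for this route: only identical distribution and integrability of $Z_1$ are used, together with convexity of $\phi$. If one wished to avoid invoking de la Vallée-Poussin, an alternative would be to verify directly that $\lim_{K\to\infty}\sup_N\E[Y_N\mathbf 1_{\{Y_N>K\}}]=0$ by truncating each $Z_i$ at level $M$ and using Chebyshev's inequality, but the convexity proof above is strictly cleaner.
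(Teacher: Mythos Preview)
Your proof is correct and takes a genuinely different route from the paper. The paper argues directly with the definition of uniform integrability: it invokes the strong law of large numbers to get $\sup_{N\ge 1}\frac{1}{N}\sum_{i=1}^N Z_i<\infty$ almost surely, then uses the exchangeability identity
\[
\E\!\left[\frac{1}{N}\sum_{i=1}^N Z_i\,\mathbf 1_{\{\frac{1}{N}\sum_{i=1}^N Z_i\ge M\}}\right]=\E\!\left[Z_1\,\mathbf 1_{\{\frac{1}{N}\sum_{i=1}^N Z_i\ge M\}}\right]\le \E\!\left[Z_1\,\mathbf 1_{\{\sup_{N\ge 1}\frac{1}{N}\sum_{i=1}^N Z_i\ge M\}}\right],
\]
and finishes by dominated convergence. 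Your argument via de la Vall\'ee--Poussin plus Jensen's inequality is shorter and, as you observe, uses only identical distribution; the paper's approach genuinely needs independence (or at least a law of large numbers) to control the almost sure supremum. On the other hand, the paper's proof is entirely self-contained, while yours imports both directions of de la Vall\'ee--Poussin as a black box. Either argument is perfectly adequate here.
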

\begin{proof}
Since $\E(Z_1)<\infty$, it follows from the strong law of large number that $\P$-a.s.
$$\lim_{N\to\infty}\frac{1}{N}\sum_{i=1}^N Z_i=\E(Z_1),$$
which yields $\P$-a.s.
$$\sup_{N\geq 1}\left\{\frac{1}{N}\sum_{i=1}^NZ_i\right\}<\infty.$$
This together with the fact that $\{Z_i\}_{i\geq 1}$ are i.i.d., $\E(Z_1)<\infty$ and the dominated convergence theorem yields that
\begin{align*}
&\lim_{M\to\infty}\sup_{N\geq 1}\E\left\{\left(\frac{1}{N}\sum_{i=1}^N Z_i\right)1_{\left\{\frac{1}{N}\sum_{i=1}^N Z_i\geq M\right\}}\right\}\\
&=\lim_{M\to\infty}\sup_{N\geq 1}\E\left\{ Z_11_{\left\{\frac{1}{N}\sum_{i=1}^N Z_i\geq M\right\}}\right\}\\
&\leq \lim_{M\to\infty}\E\left\{ Z_11_{\left\{\sup_{N\geq 1}\{\frac{1}{N}\sum_{i=1}^N Z_i\}\geq M\right\}}\right\}=0.
\end{align*}
So, we complete the proof.
\end{proof}

To derive the quantitative propagation of chaos, we introduce the projection mappings
\begin{align*}\pi(s)(\xi)=\xi(s), \ \ s\in[-r,0],\xi\in\C^{m+d}
\end{align*}
and define $\mu^s=\mu\circ\pi(s)^{-1}, \mu\in\scr P(\C^{m+d})$. Then for any $\mu\in\scr P(\C^{m+d}), s\in[-r,0]$, $\mu^s$ is a probability measure on $\R^{m+d}$. Let  $\W^0_{\theta}$ be the $L^\theta$-Wasserstein distance on $\scr P_\theta(\R^{m+d})$, the collection of all probability measures with finite $\theta$-th moment on $\R^{m+d}$. Let $\Gamma$ be a probability measure on $[-r,0]$ and define
 \begin{align}\label{wtg}
 \W^\Gamma_\theta(\gamma,\bar{\gamma}):=\int_{-r}^{0}\W^0_\theta(\gamma^s,\bar{\gamma}^s)\Gamma(\d s),\ \ \gamma,\bar{\gamma}\in \scr P_\theta(\C^{m+d}).
 \end{align}
 Noting that for any $\gamma,\bar{\gamma}\in \scr P_\theta(\C^{m+d})$, it holds
\begin{align*}|\W^0_\theta(\gamma^t,\bar{\gamma}^t)-\W^0_\theta(\gamma^s,\bar{\gamma}^s)|&\leq |\W^0_\theta(\gamma^t,\bar{\gamma}^t)-\W^0_\theta(\gamma^t,\bar{\gamma}^s)| +|\W^0_\theta(\gamma^t,\bar{\gamma}^s)-\W^0_\theta(\gamma^s,\bar{\gamma}^s)|\\
&\leq \W^0_\theta(\bar{\gamma}^t,\bar{\gamma}^s)+ \W^0_\theta(\gamma^t,\gamma^s),\ \ s,t\in[-r,0].
 \end{align*}
 So, $\W^0_\theta(\gamma^s,\bar{\gamma}^s)$ is continuous in $s$ and the right hand side of \eqref{wtg} is well-defined. Moreover, it is clear that
\begin{align}\label{stc}
\W^\Gamma_\theta(\gamma_1,\gamma_2)\leq \W_\theta(\gamma_1,\gamma_2),\ \  \gamma_1,\gamma_2\in \scr P_\theta(\C^{m+d}).
\end{align}
In particular, when $\Gamma=\delta_0$, $\W^\Gamma_\theta(\gamma,\bar{\gamma})=\W^0_\theta(\gamma^0,\bar{\gamma}^0)$. The main result in this section is as follows.
\begin{thm}\label{POC} Assume {\bf(H)} and $\E\|X_0^i\|_\infty^\theta<\infty$.
Then the following assertions hold.
\begin{enumerate}
\item[(1)] It holds \begin{align}\label{COT00}\lim_{N\to\infty}\E\sup_{t\in[0,T]}|X^i(t)- X^{i,N}(t)|^\theta=0.\end{align}
Consequently,
\begin{align}\label{emp}\lim_{N\to\infty}\E\sup_{t\in[0,T]}\W_\theta(\hat{\mu}_t^N,\mu_t)^\theta=0.
\end{align}
If in addition, $b(t,\xi,\gamma)$ and $\sigma(t,\xi,\gamma)$ do not depend on $\gamma$ and there exists a constant $K_0>0$ such that
\begin{align}\label{sig} \nonumber &|B(t,\xi,\gamma)-B(t,\xi,\tilde{\gamma})|\leq K_0 [\W_\theta(\gamma,\tilde{\gamma})\wedge 1],\\
&\|\sigma(t,\xi)^{-1}\|<K_0, \ \ (t,\xi)\in[0,T]\times\C^{m+d}, \gamma,\tilde{\gamma}\in\scr P_\theta(\C^{m+d}),
\end{align}
then for any $k\geq 1$,
\begin{align}\label{enp}&\nonumber\lim_{N\to\infty}\sup_{t\in[0,T]}\|\L_{(X_t^{1,N},X_t^{2,N},\cdots,X_t^{k,N})}-\mu_t^{\otimes k}\|^2_{var}\\
&\qquad\qquad\qquad\leq 2\lim_{N\to\infty}\sup_{t\in[0,T]}\Ent\left(\L_{(X_t^{1,N},X_t^{2,N},\cdots,X_t^{k,N})}|\mu_t^{\otimes k}\right)=0,\end{align}
where $\mu_t^{\otimes k}=\prod_{i=1}^k\mu_t$, the $k$-independent product of $\mu_t$.
\item[(2)] Assume that $\theta\geq 2$ or $\theta\in[1,2)$ but $\sigma(t,\xi,\gamma)$ does not depend on $\gamma$, $\E\|X_0^i\|_\infty^{q}<\infty$ for some $q>\theta$ and there exists a probability measure $\Gamma$ on $[-r,0]$ such that (H2) holds for
$\W^\Gamma_\theta$ replacing $\W_\theta$,
then there exists a constant $C>0$ depending only on $\theta,q,m+d,T$ and $\E\|X_0^i\|_\infty^{q}$ such that
\begin{equation}\begin{split}\label{S1}
&\E\sup_{t\in[0,T]}|X^i(t)-X^{i,N}(t)|^\theta\le C R_{m+d}(N),
\end{split}\end{equation}
where
\begin{equation*}\begin{split}
R_{m+d}(N)=
\begin{cases}
N^{-\ff{1}{2}}+N^{-\frac{q-\theta}{q}},~~~~~~~~~~~~~~~~~~~~\theta>\frac{m+d}{2}, q\neq 2\theta,\\
N^{-\ff{1}{2}}\log (1+N)+N^{-\frac{q-\theta}{q}},~~~~ ~~\theta=\frac{m+d}{2}, q\neq 2\theta,\\
N^{-\ff{\theta}{m+d}}+N^{-\frac{q-\theta}{q}},~~~~~~~~~~~~~~~~\theta\in[1,\frac{m+d}{2}), q\neq \frac{m+d}{m+d-\theta},
\end{cases}
\end{split}\end{equation*}
and consequently
\begin{equation}\label{S30}\begin{split}
&\sup_{t\in[0,T]}\E\W^\Gamma_\theta(\hat{\mu}_t^N,\mu_t)^\theta\le C R_{m+d}(N).
\end{split}\end{equation}
If in addition, $b(t,\xi,\gamma)$ and $\sigma(t,\xi,\gamma)$ do not depend on $\gamma$ and \eqref{sig}  holds for
$\W^\Gamma_\theta$ replacing $\W_\theta$, then there exists a constant $C>0$ depending on $\theta,q,m+d,T$ and $\E\|X_0^i\|_\infty^{q}$ such that for any $k\geq 1$,
\begin{equation}\label{S10}\begin{split}
&\sup_{t\in[0,T]}\|\L_{(X_t^{1,N},X_t^{2,N},\cdots,X_t^{k,N})}-\mu_t^{\otimes k}\|^2_{var}\leq 2\sup_{t\in[0,T]}\Ent\left(\L_{(X_t^{1,N},X_t^{2,N},\cdots,X_t^{k,N})}|\mu_t^{\otimes k}\right)\\
&\le Ck R_{m+d}(N)1_{\{\theta\in[1,2)\}}+CkR_{m+d}(N)^{\frac{2}{\theta}}1_{\{\theta\geq 2\}}.
\end{split}\end{equation}
\end{enumerate}
\end{thm}
\begin{proof} (1) If $\E\|X^i_0\|^p_\infty<\infty$ for some $p\geq \theta$, it is standard to derive from  {\bf(H)} that
\begin{align}\label{mmo}
\E\sup_{t\in[0,T]}\|X^{i}_t\|_\infty^p<C_0(1+\E(\|X^{i}_0\|_\infty^p))
\end{align}
for some constant $C_0>0$.
Let $\eta^{i,N}(t)=\sup_{s\in[0,t]}|X^{i,N}(s)-X^i(s)|$. Applying the BDG inequality and H\"{o}lder's inequality, we derive from {\bf(H)} that
\begin{equation}\label{bbn}
\begin{split}
\E\eta^{i,N}(t)^\theta&\le
c_0\int_{0}^{t}\E(\eta^{i,N}(s)^\theta+\W_\theta(\hat\mu^N _s,\mu_s)^\theta)\d s\\
&+c_0\E\left(\int_{0}^{t}(\eta^{i,N}(s)+\W_\theta(\hat\mu^N _s,\mu_s))^2\d s\right)^{\frac{\theta}{2}}
\end{split}
\end{equation}
for some constant $c_0>0$.
Let $\tt\mu_t^N =\ff{1}{N}\sum_{j=1}^N\dd_{X_t^j}$. Noting that
\begin{align}\label{wwe}\mathbb{W}_\theta(\hat\mu^N_s,\tt\mu^N_s)\leq \left(\frac{1}{N}\sum_{i=1}^N\|X^{i,N}_s-X^i_s\|_\infty^\theta\right)^{\frac{1}{\theta}},
\end{align}
we obtain
\begin{equation}\label{A4}
\begin{split}
\W_\theta(\hat\mu^N _s,\mu_s)&\le
\mathbb{W}_\theta(\hat\mu^N_s,\tt\mu^N_s)+\mathbb{W}_\theta(\tt\mu^N_s,\mu_s)\\
&\le
\left(\frac{1}{N}\sum_{i=1}^N\|X^{i,N}_s-X^i_s\|_\infty^\theta\right)^{\frac{1}{\theta}} +\mathbb{W}_\theta(\tt\mu^N_s,\mu_s).
\end{split}
\end{equation}
Next, we divide into two cases: $\theta\geq 2$ and $\theta\in[1,2)$ to estimate the second term on the right hand side of \eqref{bbn}.

If $\theta\geq 2$, by H\"{o}lder's inequality, we have
\begin{equation*}
\begin{split}
c_0\E\left(\int_0^t(\eta^{i,N}(s)+\W_\theta(\hat\mu^N _s,\mu_s))^2 \d s\right)^{\frac{\theta}{2}}\le
c_1\int_{0}^{t}\E\eta^{i,N}(s)^{\theta}\d s+c_1\E\int_0^t\W_\theta(\hat\mu^N _s,\mu_s)^{\theta}\d s
\end{split}
\end{equation*}
for some constant $c_1>0.$
This together with \eqref{bbn} and \eqref{A4} implies that there exists a constant $c_2>0$ such that
\begin{equation*}
\begin{split}
\E\eta^{i,N}(t)^\theta&\le
c_2\int_{0}^{t}\E\eta^{i,N}(s)^\theta\d s+c_2\E\int_0^t\mathbb{W}_\theta(\tt\mu^N_s,\mu_s)^\theta\d s.
\end{split}
\end{equation*}
\eqref{mmo} for $p=\theta$ and Gronwall's inequality give
\begin{equation}\label{LEW00}
\E\eta^{i,N}(t)^{\theta}\le c_3\E\int_0^t\mathbb{W}_\theta(\tt\mu^N_s,\mu_s)^\theta\d s
\end{equation}
for some constant $c_3>0$.

If $\theta\in[1,2)$, it follows from \eqref{A4}, the inequality $\sqrt{|ab|}\leq \frac{|a|+|b|}{2}$ and H\"{o}lder's inequality that
\begin{equation}\label{tte}
\begin{split}
&c_0\E\left(\int_0^t(\eta^{i,N}(s)+\W_\theta(\hat\mu^N _s,\mu_s))^2 \d s\right)^{\frac{\theta}{2}}\\
&\leq c_0\E\left(\int_0^t\left(\eta^{i,N}(s)+\left(\frac{1}{N}\sum_{i=1}^N\|X^{i,N}_s-X^i_s\|_\infty^\theta\right)^{\frac{1}{\theta}} +\mathbb{W}_\theta(\tt\mu^N_s,\mu_s)\right)^2 \d s\right)^{\frac{\theta}{2}}\\
&\le
c'_1\int_{0}^{t}\E\eta^{i,N}(s)^{\theta}\d s+\frac{1}{2}\E\eta^{i,N}(t)^\theta+c'_1\E\left(\int_0^t\W_\theta(\tilde{\mu}^N _s,\mu_s)^{2}\d s\right)^{\frac{\theta}{2}}
\end{split}
\end{equation}
for some constant $c'_1>0$.
So, this combined with \eqref{bbn} and \eqref{A4} derives
\begin{equation}\label{n00}
\begin{split}
\E\eta^{i,N}(t)^\theta&\le
c'_2\int_{0}^{t}\E\eta^{i,N}(s)^\theta\d s+c'_2\E\int_0^t\mathbb{W}_\theta(\tt\mu^N_s,\mu_s)^\theta\d s\\
&\qquad+c'_2\E\left(\int_0^t\W_\theta(\tilde{\mu}^N _s,\mu_s)^{2}\d s\right)^{\frac{\theta}{2}}
\end{split}
\end{equation}
for some constant $c'_2>0$.
Therefore, using Gr\"{o}nwall's inequality for \eqref{n00}, there exists a constant $c'_3>0$ such that
\begin{equation}\label{LEW}
\E\eta^{i,N}(t)^{\theta}\le c'_3\E\int_0^t\mathbb{W}_\theta(\tt\mu^N_s,\mu_s)^\theta\d s+c'_3\E\left(\int_0^t\W_\theta(\tilde{\mu}^N _s,\mu_s)^{2}\d s\right)^{\frac{\theta}{2}}.
\end{equation}
Let $\C_T^{m+d}=C([-r,T];\R^{m+d})$ be equipped with the uniform norm and $\scr P(\C_T^{m+d})$ be the set of the probability measures on $\C_T^{m+d}$. Define
$$\scr P_\theta(\C_T^{m+d})=\left\{\mu^T\in \scr P(\C_T^{m+d}): \int_{\C_T^{m+d}}\sup_{s\in[-r,T]}|\xi(s)|^\theta\mu^T(\d \xi)<\infty\right\}$$
and denote $\W_{\theta,T}$ as the $L^\theta$-Wasserstein distance on $\scr P_\theta(\C_T^{m+d})$. So, $(\scr P_\theta(\C_T^{m+d}),\W_{\theta,T})$ is a Polish space.

Next, by the triangle inequality, we arrive at
\begin{align}\label{sup00}\nonumber \sup_{s\in[0,T]}\mathbb{W}_\theta(\tt\mu^N_s,\mu_s)&\leq \W_{\theta,T}\left(\frac{1}{N}\sum_{i=1}^N\delta_{X^{i}([-r,T])},\L_{X^i([-r,T])}\right) \\
&\leq \W_{\theta,T}\left(\frac{1}{N}\sum_{i=1}^N\delta_{X^{i}([-r,T])},\delta_0\right) +\W_{\theta,T}\left(\delta_0,\L_{X^i([-r,T])}\right) \\
\nonumber&\leq \left(\frac{1}{N}\sum_{i=1}^N\sup_{s\in[0,T]}\|X^{i}_s\|_\infty^{\theta}\right)^{\frac{1}{\theta}} +\E\left(\sup_{s\in[0,T]}\|X^{i}_s\|_\infty^{\theta}\right)^{\frac{1}{\theta}}.
\end{align}
%\begin{align}\label{sup00}\nonumber \sup_{s\in[0,T]}\mathbb{W}_\theta(\tt\mu^N_s,\mu_s)&\leq \W_{\theta,T}\left(\frac{1}{N}\sum_{i=1}^N\delta_{X^{i}([-r,T])},\L_{X^i([-r,T])}\right) \\
%&\leq \sup_{s\in[-r,T]}\left(\mathbb{W}_\theta(\tt\mu^N_s,\delta_0)+\mathbb{W}_\theta(\delta_0,\mu_s)\right)\\
%&\leq \left(\frac{1}{N}\sum_{i=1}^N\sup_{s\in[-r,T]}\|X^{i}_s\|_\infty^{\theta}\right)^{\frac{1}{\theta}} +\sup_{s\in[-r,T]}\mu_s(\|\cdot\|_\infty^{\theta})^{\frac{1}{\theta}}.
%\end{align}
%By the strong law of large number and \eqref{mmo} for $p=\theta$, we know $\P$-a.s. $$\sup_{N\geq 1}\frac{1}{N}\sum_{i=1}^N\sup_{s\in[0,t]}\|X^{i}_s\|_\infty^{\theta}< \infty.$$
Thanks to the generalized Glivenko-Cantelli-Varadarajan theorem, see for instance \cite[Corollary 12.2.2]{RKF}, it holds $\P$-a.s.
\begin{align}\label{LLN}\lim_{N\to\infty}\W_{\theta,T}\left(\frac{1}{N}\sum_{i=1}^N\delta_{X^{i}([-r,T])}, \L_{X^i([-r,T])}\right)=0.
 \end{align}
% \begin{align}\label{LLN}\lim_{N\to\infty}\W_\theta(\tilde{\mu}^N _s,\mu_s)=0.
% \end{align}
%  This combined with \eqref{sup00} and the dominated convergence theorem yields $\P$-a.s.
%\begin{align}\label{cov}\lim_{N\to\infty}\int_0^t\mathbb{W}_\theta(\tt\mu^N_s,\mu_s)^{\theta}\d s +\left(\lim_{N\to\infty}\int_0^t\W_\theta(\tilde{\mu}^N _s,\mu_s)^{2}\d s\right)^{\frac{\theta}{2}}=0.
%\end{align}
Therefore, it follows from \eqref{mmo} for $p=\theta$, \eqref{sup00}, \eqref{LLN}, Lemma \ref{INT} for $Z_i=\sup_{s\in[0,T]}\|X^{i}_s\|_\infty^{\theta}$ and the dominated convergence theorem that
\begin{align}\label{lnt}\lim_{N\to\infty}\E\sup_{s\in[0,T]}\mathbb{W}_\theta(\tt\mu^N_s,\mu_s)^\theta\leq \lim_{N\to\infty} \E\left[\W_{\theta,T}\left(\frac{1}{N}\sum_{i=1}^N\delta_{X^{i}([-r,T])},\L_{X^i([-r,T])}\right)^\theta\right]=0.
\end{align}
%\begin{align*}&\lim_{N\to\infty}\E\int_0^t\mathbb{W}_\theta(\tt\mu^N_s,\mu_s)^\theta\d s+\lim_{N\to\infty}\E\left(\int_0^t\W_\theta(\tilde{\mu}^N _s,\mu_s)^{2}\d s\right)^{\frac{\theta}{2}}\\
%& =\E\lim_{N\to\infty}\int_0^t\mathbb{W}_\theta(\tt\mu^N_s,\mu_s)^{\theta}\d s +\E\left(\lim_{N\to\infty}\int_0^t\W_\theta(\tilde{\mu}^N _s,\mu_s)^{2}\d s\right)^{\frac{\theta}{2}}=0.
%\end{align*}
This and \eqref{LEW00} or \eqref{LEW} derive \eqref{COT00}. Finally,
by \eqref{COT00}, \eqref{lnt} and \eqref{A4}, we get \eqref{emp}.

When $b(t,\xi,\gamma)$ and $\sigma(t,\xi,\gamma)$ do not depend on $\gamma$, we can rewrite \eqref{GP0} as
\begin{equation*}\begin{split}\label{GPS}\d X^i(t)&=\left(
                                     \begin{array}{c}
                                       b(t,X^i_t) \\
                                       B(t,X^i_t,\frac{1}{N}\sum_{i=1}^N\delta_{X_t^i}) \\
                                     \end{array}
                                   \right)
\d t+\left(
       \begin{array}{c}
         0_{m\times d} \\
         \sigma(t,X^i_t) \\
       \end{array}
     \right)
\d \tilde{W}^i(t), \ \ 1\leq i\leq N,
\end{split}\end{equation*}
with
$$\d \tilde{W}^i(t)=\d W^i(t)-\tilde{\Gamma}^i(t)\d t,\ \ 1\leq i\leq N$$
and $$\tilde{\Gamma}^i(t)=\sigma(t,X^i_t)^{-1}[B(t,X^{i}_t, \frac{1}{N}\sum_{i=1}^N\delta_{X_t^i})-B(t,X^{i}_t, \mu_t)],\ \ 1\leq i\leq N.$$
It follows from \eqref{sig} that
\begin{align}\label{gam}|\tilde{\Gamma}^i(t)|\leq K_0^2(\W_{\theta}(\frac{1}{N}\sum_{i=1}^N\delta_{X_t^i},\mu_t)\wedge 1),\ \ t\in[0,T],1\leq i\leq N.\end{align}
Let $$R_t=\exp{\left\{\sum_{i=1}^N\int_0^t\<\tilde{\Gamma}^i(s),\d W^i(s)\>-\frac{1}{2}\sum_{i=1}^N\int_0^t|\tilde{\Gamma}^i(s)|^2\d s \right\}},\ \ t\in[0,T].$$ \eqref{gam} and Girsanov's theorem imply that $\{R_t\}_{t\in[0,T]}$ is a martingale and $((\tilde{W}^i(t))_{1\leq i\leq N})_{t\in[0,T]}$ is an $Nd$-dimensional Brownian motion under $\Q_T=R_T \P$ and
$$\L_{(X^{1}_t,X^{2}_t,\cdots,X^{N}_t)}|\Q_T=\L_{(X^{1,N}_t,X^{2,N}_t,\cdots,X^{N,N}_t)}|\P,\ \ t\in[0,T].$$
This implies that
\begin{align*}\E [f(X^{1,N}_t,X^{2,N}_t,\cdots,X^{N,N}_t)]&=\E [R_T f(X^{1}_t,X^{2}_t,\cdots,X^{N}_t)]\\
&=\E [R_t f(X^{1}_t,X^{2}_t,\cdots,X^{N}_t)],\ \ f\in\scr B_b(\C^{N(m+d) }), t\in[0,T].
\end{align*}
So,  there exists  a constant $C>0$ such that
\begin{align*}&\Ent(\L_{(X^{1,N}_t,X^{2,N}_t,\cdots,X^{N,N}_t)}|\P|\mu_t^{\otimes N})\\
&\leq \E(R_t\log R_t)= \frac{1}{2}\sum_{i=1}^N\int_0^t\E^{\Q_T}|\tilde{\Gamma}^i(s)|^2\d s\\
&\leq  C^2N\int_0^t\E^{\Q_T}(\W_{\theta}(\frac{1}{N}\sum_{i=1}^N\delta_{X_s^i},\mu_s)\wedge 1)^2\d s\\
&=C^2N\int_0^t\E(\W_{\theta}(\frac{1}{N}\sum_{i=1}^N\delta_{X_s^{i,N}},\mu_s)\wedge 1)^2\d s\\
&=C^2N\int_0^t\E(\W_{\theta}(\hat{\mu}^N_s,\mu_s)\wedge 1)^2\d s,\ \ t\in[0,T].\end{align*}
This together with \cite[Lemma 3.9]{MD} implies that for any $k\geq 1$ and $N\geq k$,
\beg{align*} &\mathrm{Ent}(\L_{(X_{t}^{1,N},X_{t}^{2,N},\cdots, X_{t}^{k,N})}|\mu_t^{\otimes k})\leq 2C^2k\int_0^t\E(\W_{\theta}(\hat{\mu}^N_s,\mu_s)\wedge 1)^2\d s.\end{align*}
So, Pinsker's inequality \eqref{ETX} yields
\begin{align}\label{aae}\nonumber\|\L_{(X^{1,N}_t,X^{2,N}_t,\cdots,X^{k,N}_t)}-\mu_t^{\otimes k}\|_{var}^2&\leq 2\Ent(\L_{(X^{1,N}_t,X^{2,N}_t,\cdots,X^{k,N}_t)}|\mu_t^{\otimes k})\\
&\leq 4C^2k\int_0^t\E(\W_{\theta}(\hat\mu_s^N,\mu_s)\wedge 1)^2\d s.
\end{align}
Note that
\begin{align}\label{wth}\E(\W_{\theta}(\hat\mu_s^N,\mu_s)\wedge 1)^2\leq \E(\W_{\theta}(\hat\mu_s^N,\mu_s)^\theta)1_{\{\theta\in[1,2)\}}+ \left(\E\W_{\theta}(\hat\mu_s^N,\mu_s)^\theta\right)^{\frac{2}{\theta}}1_{\{\theta\geq 2\}}.
\end{align}
By \eqref{emp} and \eqref{aae}, we prove \eqref{enp}.

(2) Assume that (H2) holds for
$\W^\Gamma_\theta$ replacing $\W_\theta$. When $\theta\geq 2$, repeating the proof to get \eqref{LEW00}, we derive
\begin{equation}\label{EEL}
\E\eta^{i,N}(t)^{\theta}\le c_4\int_0^t\E\mathbb{W}^\Gamma_\theta(\tt\mu^N_s,\mu_s)^\theta\d s
\end{equation}
for some constant $c_4>0$.
When $\theta\in[1,2)$ but $\sigma(t,\xi,\gamma)$ does not depend on $\gamma$, \eqref{tte} is replaced by
\begin{equation*}
\begin{split}
c_0\E\left(\int_0^t\eta^{i,N}(s)^2 \d s\right)^{\frac{\theta}{2}}
&\le
c_4'\int_{0}^{t}\E\eta^{i,N}(s)^{\theta}\d s+\frac{1}{2}\E\eta^{i,N}(t)^\theta
\end{split}
\end{equation*}
for some constant $c'_4>0$.
Then \eqref{EEL} instead of \eqref{LEW} holds.
Next, by the definition of $\W_\theta^\Gamma$, we have
\begin{equation}\label{EEL00}
\E\mathbb{W}^\Gamma_\theta(\tt\mu^N_s,\mu_s)^\theta\leq \int_{-r}^0\E\left[\W_\theta^0\left(\ff{1}{N}\sum_{i=1}^N\dd_{X_s^i(u)},\L_{X_s^i(u)}\right)^\theta\right]\Gamma(\d u).
\end{equation}
Note that $\sup_{t\in[0,T]}\mu_t(\|\cdot\|_\infty^q)<\infty$ due to \eqref{mmo} for $p=q$. By \cite[Theorem 1]{FG} for $p=\theta,q=q$, there exists a constant $C_0>0$ depending only on $\theta,q,m+d$ such that
\begin{align*}&\E\left[\W_\theta^0\left(\ff{1}{N}\sum_{i=1}^N\dd_{X_s^i(u)},\L_{X_s^i(u)}\right)^\theta\right]\\
&\le C_0\left(\sup_{t\in[0,T]}\mu_t(\|\cdot\|_\infty^q)\right)^{\frac{\theta}{q}}
R_{m+d}(N),\ \ s\in[0,T],u\in[-r,0].
\end{align*}
Substituting this into \eqref{EEL00}, we derive from \eqref{mmo} for $p=q$ that
\begin{equation}\label{EEL0n}
\sup_{s\in[0,T]}\E\mathbb{W}^\Gamma_\theta(\tt\mu^N_s,\mu_s)^\theta\le C_0 \left(\sup_{t\in[0,T]}\mu_t(\|\cdot\|_\infty^q)\right)^{\frac{\theta}{q}}
R_{m+d}(N)\leq CR_{m+d}(N).
\end{equation}
So, \eqref{S1} follows from \eqref{EEL} and \eqref{EEL0n}. Moreover, it follows from \eqref{stc} and \eqref{wwe} that
\begin{equation*}
\begin{split}
\W_\theta^\Gamma(\hat\mu^N _s,\mu_s)^\theta&\le
2^{\theta-1}\mathbb{W}^\Gamma_\theta(\hat\mu^N_s,\tt\mu^N_s)^\theta +2^{\theta-1}\mathbb{W}^\Gamma_\theta(\tt\mu^N_s,\mu_s)^\theta\\
&\le
2^{\theta-1}\mathbb{W}_\theta(\hat\mu^N_s,\tt\mu^N_s)^\theta +2^{\theta-1}\mathbb{W}^\Gamma_\theta(\tt\mu^N_s,\mu_s)^\theta\\
&\le2^{\theta-1}
\frac{1}{N}\sum_{i=1}^N\|X^{i,N}_s-X^i_s\|_\infty^\theta +2^{\theta-1}\mathbb{W}^\Gamma_\theta(\tt\mu^N_s,\mu_s)^\theta,
\end{split}
\end{equation*}
which implies \eqref{S30} due to \eqref{S1} and \eqref{EEL0n}.

Finally, if $b(t,\xi,\gamma)$ and $\sigma(t,\xi,\gamma)$ do not depend on $\gamma$ and \eqref{sig} holds for
$\W^\Gamma_\theta$ replacing $\W_\theta$, then \eqref{aae} holds for $\W^\Gamma_\theta$ replacing $\W_\theta$. Moreover, by \eqref{wth} for $\W^\Gamma_\theta$ replacing $\W_\theta$ and \eqref{S30}, we derive \eqref{S10} and the proof is completed.
\end{proof}

\section{Appendix}
In this section, we give the well-posedness of general path-distribution dependent SDEs as well as mean field interacting particle system, and then apply it to the path-distribution dependent SHS.
Fix $T>0$. Let $n,k\in \mathbb{N}^+$ and $\theta\geq 1$. Consider path-distribution dependent SDEs on $\R^n$:
\begin{align}\label{PDD}
\d X(t)=H(t,X_t,\L_{X_t})\d t+\Sigma(t,X_t,\L_{X_t})\d W(t),\ \ t\in[0,T].
\end{align}
where $H:[0,T]\times\scr C^n\times \scr P(\C^n)\to \mathbb{R}^n$, $\Sigma:[0,T]\times\C^n\times \scr P(\C^n)\to\mathbb{R}^{n}\otimes\mathbb{R}^{k}$ are measurable and $W(t)$ is a $k$-dimensional Brownian motion on some complete filtration probability space $(\Omega, \scr F, (\scr F_t)_{t\geq 0},\P)$.
Let $\hat{\scr P}(\C^n)$ be a subset of $\scr P(\C^n)$ and it is equipped with some topology.
\begin{defn}\label{defws} The SDE \eqref{PDD} is called well-posed for distributions in $\hat{\scr P}(\C^n)$, if for any $\F_0$-measurable initial value
$X_0$ with $\L_{X_0}\in \hat{\scr P}(\C^n)$ (respectively any initial distribution $\gamma\in\hat{\scr P}(\C^n)$), it has a unique strong solution (respectively weak solution) such that $\L_{X_\cdot}\in C([0,T];\hat{\scr P}(\C^n))$, the space of continuous maps from $[0,T]$ to $\hat{\scr P}(\C^n)$. In particular, \eqref{PDD} is called well-posed for distributions in $\scr P_\theta(\C^n)$, if the above holds for $(\scr P_\theta(\C^n),\W_\theta)$ replacing $\hat{\scr P}(\C^n)$.
\end{defn}

\begin{thm}\label{GWP} Assume that there exists some constant $K\geq0$ such that
\begin{align}\label{HTL}
\nonumber&|H(s,\xi,\gamma_1)-H(s,\eta,\gamma_2)| + |\Sigma(s,\xi,\gamma_1)-\Sigma(s,\eta,\gamma_2)|\leq K(\|\xi-\eta\|_\infty+\W_\theta(\gamma_1,\gamma_2)),\\
&|H(s,0,\delta_0)| + |\Sigma(s,0,\delta_0)|\leq K,\ \ s\in[0,T], \xi,\eta\in\C^n, \gamma_1,\gamma_2\in \scr P_{\theta}(\C^n).
\end{align}
Then \eqref{PDD} is strongly well-posed in $\scr P_{\theta}(\C^n)$ and there exists a constant $C>0$ such that
\begin{align*}\W_\theta(P_t^\ast \mu_0,P_t^\ast \nu_0)\leq C\e ^{Ct}\W_\theta(\mu_0,\nu_0),\ \ t\in[0,T],\mu_0,\nu_0\in\scr P_\theta(\C^n),
\end{align*}
here $P_t^\ast \mu_0$ is the distribution of the solution to \eqref{PDD} with initial distribution $\mu_0\in\scr P_\theta(\C^n)$.
\end{thm}
\begin{proof}
It follows from \eqref{HTL} that for any $\mu\in C([0,T],\scr P_\theta(\C^n))$, the classical SDE
\begin{align}\label{PDP}
\d X^\mu(t)=H(t,X^\mu_t,\mu_t)\d t+\Sigma(t,X_t^\mu,\mu_t)\d W(t),\ \ t\in[0,T]
\end{align}
is well-posed. For any $\F_0$-measurable random variable $X_0$ with $\L_{X_0}\in\scr P_\theta(\C^n)$, let $X_t^{\mu,X_0}$ be the unique solution to \eqref{PDP} starting from $X_0$. Define the mapping $\Phi^{X_0}:C([0,T],\scr P_\theta(\C^n))\to C([0,T],\scr P_\theta(\C^n))$ as
$$\Phi_t^{X_0}(\mu)=\L_{X_t^{\mu,X_0}},\ \ t\in[0,T].$$
By \eqref{HTL} and the inequality $$(|a|+|b|+|c|)^{\theta}\leq 3^{\theta-1}(|a|^\theta+|b|^\theta+|c|^\theta),$$ we arrive at
\begin{align}\label{mnu00}
\nonumber|X^{\nu,\tilde{X}_0}(t)-X^{\mu,X_0}(t)|^\theta&\leq 3^{\theta-1}|\tilde{X}(0)-X(0)|^\theta \\ &+3^{\theta-1}\left|\int_0^t[H(s,X^{\nu,\tilde{X}_0}_s,\nu_s)-H(s,X^{\mu,X_0}_s,\mu_s)]\d s\right|^\theta\\
\nonumber&+3^{\theta-1}\left|\int_0^t[\Sigma(s,X^{\nu,\tilde{X}_0}_s,\nu_s) -\Sigma(s,X^{\mu,X_0}_s,\mu_s)]\d W(s)\right|^\theta.
\end{align}
Let $\xi_t=\sup_{s\in[-r,t]}|X^{\mu,X_0}(s)-X^{\nu,\tilde{X}_0}(s)|$.
By \eqref{HTL}, it follows from BDG's inequality, the inequality $\sqrt{|ab|}\leq \frac{|a|+|b|}{2}$ and H\"{o}lder's inequality that
\begin{align*}
&3^{\theta-1}\E\sup_{v\in[0,t]}\left|\int_{0}^{v}\{\Sigma(s,X^{\mu,X_0}_s,\mu_s) -\Sigma(s,X^{\nu,\tilde{X}_0}_s,\nu_s)\}\d W(s)\right|^{\theta}\\
&\leq C_0\E\left(\int_0^t(\xi_s^2+\W_\theta(\mu_s,\nu_s)^2)\d s\right)^{\frac{\theta}{2}}\\
&\leq \frac{1}{2}\E\xi_t^\theta ++C_1\int_0^t\E\xi_s^{\theta}\d s+C_1\left(\int_0^t\W_\theta(\mu_s,\nu_s)^2\d s\right)^{\frac{\theta}{2}}
\end{align*}
for some constant $C_1>0$.
Again by \eqref{HTL} and H\"{o}lder's inequality, there exists a constant $C_2>0$ such that
\begin{align*}
&3^{\theta-1}\E\sup_{v\in[0,t]}\left|\int_0^v[H(s,X^{\mu,X_0}_s,\mu_s) -H(s,X^{\nu,\tilde{X}_0}_s,\nu_s)]\d s\right|^{\theta}\leq C_2\E\int_0^t(\xi_s^\theta+\W_\theta(\mu_s,\nu_s)^\theta)\d s.
\end{align*}
%and
%\begin{align*}
%&\E\sup_{v\in[0,t]}\left|\int_0^v\Gamma^\mu(s,X^{\nu,\tilde{X}_0}_s,\mu_s,\nu_s)\d s\right|^{\theta}\leq C\E\int_0^t\W_\theta(\mu_s,\nu_s)^\theta\d s.
%\end{align*}
As a result, we obtain from \eqref{mnu00} and H\"{o}lder's inequality that
\begin{align*}
\E\xi_t^{\theta}&\leq 2^{\theta-1}\E\|X_0-\tilde{X}_0\|_\infty^\theta+ 2^{\theta-1}\E\sup_{s\in[0,t]}|X^{\mu,X_0}(s)-X^{\nu,\tilde{X}_0}(s)|^{\theta}\\
&\leq C_3\E\|X_0-\tilde{X}_0\|_\infty^\theta+C_3\int_0^t\E\xi_s^{\theta}\d s+C_3\int_0^t\W_\theta(\mu_s,\nu_s)^{\theta}\d s+C_3\left(\int_0^t\W_\theta(\mu_s,\nu_s)^2\d s\right)^{\frac{\theta}{2}}
\end{align*}
for some constant $C_3>0$. So, Gronwall's inequality yields that there exists a constant $C_4>0$ such that
 \begin{align}\label{uuu} \nonumber\W_{\theta}(\Phi^{X_0}_t(\mu),\Phi^{\tilde{X}_0}_t(\nu))^\theta\leq \E\xi_t^{\theta}&\leq
C_4\E\|X_0-\tilde{X}_0\|_\infty^\theta+C_4\int_0^t\W_{\theta}(\mu_s, \nu_s)^{\theta}\d s\\
&\qquad\qquad\qquad+C_4\left(\int_0^t\W_\theta(\mu_s,\nu_s)^2\d s\right)^{\frac{\theta}{2}},\ \  t\in[0,T].
\end{align}
Therefore, for any $\delta>0$, we have
  \begin{align*}
\sup_{t\in[0,T]}\e^{-\delta \theta t}\W_{\theta}(\Phi^{X_0}_t(\mu),\Phi^{X_0}_t(\nu))^\theta\leq \sup_{t\in[0,T]}\e^{-\delta\theta t}\W_{\theta}(\mu_t, \nu_t)^\theta C_4[(\delta\theta)^{-1}+(2\delta)^{-\frac{\theta}{2}}].
\end{align*}
Take $\delta_0$ satisfying $\left(C_4[(\delta_0\theta)^{-1}+(2\delta_0)^{-\frac{\theta}{2}}]\right)^{\frac{1}{\theta}}<\frac{1}{2}$ and
let
$E^{X_0}:= \{\mu\in C([0,T];\scr P_\theta(\C^n)):\mu_0=\L_{X_0}\}$ equipped with the complete metric
$$\rr_{\delta_0}(\nu,\mu):= \sup_{t\in [0,T]}\e^{-\delta_0t} \W_{\theta}(\nu_t,\mu_t),\ \ \mu,\nu\in E^{X_0}.$$
Then we conclude that
  \begin{align*}
\rho_{\delta_0}(\Phi^{X_0}_t(\mu),\Phi^{X_0}_t(\nu))< \frac{1}{2}\rho_{\delta_0}(\mu_t,\nu_t),\ \ \mu,\nu\in E^{X_0},
\end{align*}
and the Banach fixed point theorem yields that
\begin{equation*} \Phi^{X_0}_t(\mu)= \mu_t,\ \ t\in [0,T]\end{equation*}  has a unique solution $\mu\in E^{X_0}.$ This means that \eqref{PDD} has a unique strong solution on $[0,T]$ with initial value $X_0$.

Next, applying \eqref{uuu} for $\mu_t=\L_{X_t}, \nu_t=\L_{\tilde{X}_t}$ and noting
that
\begin{align*}C_4\left(\int_0^t\W_\theta(\mu_s,\nu_s)^2\d s\right)^{\frac{\theta}{2}}&\leq \frac{1}{2}\sup_{s\in[0,t]}\W_{\theta}(\L_{X_s},\L_{\tilde{X}_s})^\theta+C_5\left(\int_0^t\W_\theta(\L_{X_s},\L_{\tilde{X}_s})\d s\right)^{\theta}\\
&\leq \frac{1}{2}\sup_{s\in[0,t]}\W_{\theta}(\L_{X_s},\L_{\tilde{X}_s})^\theta+ C_6\int_0^t\W_\theta(\L_{X_s},\L_{\tilde{X}_s})^{\theta}\d s
\end{align*}
for some constant $C_6>0$, there exists a constant $C_7>0$ such that
\begin{align*}
&\sup_{s\in[0,t]}\W_{\theta}(\L_{X_s},\L_{\tilde{X}_s})^\theta \leq C_7\E\|X_0-\tilde{X}_0\|_\infty^\theta+
C_7\int_0^t\W_{\theta}(\L_{X_s},\L_{\tilde{X}_s})^\theta\d s,\ \ t\in[0,T].
\end{align*}
So, by Gr\"{o}nwall inequality and taking infimum for all $X_0,\tilde{X}_0$ satisfying $\L_{X_0}=\mu_0,\L_{\tilde{X}_0}=\nu_0$, we complete the proof.
\end{proof}
\begin{rem}\label{Rem}
Under {\bf(A2)}-{\bf(A3)}, the assertions in Theorem \ref{GWP} hold for \eqref{EH} replacing \eqref{PDD} by applying Theorem \ref{GWP} for $n=m+d$, $k=d$ and
$$H(t,\xi,\gamma)=\left(
        \begin{array}{c}
          A\xi^{(1)}(0)+M\xi^{(2)}(0) \\
          Z(\xi(0),\gamma)+B(\xi,\gamma)\\
        \end{array}
      \right),
\ \ \Sigma=\left(
                                                \begin{array}{c}
                                                  0_{m\times d} \\
                                                  \sigma \\
                                                \end{array}
                                              \right).
$$
Similarly, under {\bf (H)}, the assertions in Theorem \ref{GWP} hold for \eqref{E1} replacing \eqref{PDD}.
\end{rem}

Next, consider the mean field interacting particle system:
\begin{align}\label{HSM}\d X^{i,N}(t)=      H(t,X^{i,N}_t,\hat\mu_t^N)+\Sigma(t,X^{i,N}_t,\hat\mu_t^N) \d W^i(t),\ \ 1\leq i\leq N,
\end{align}
with $\hat\mu_t^N=\frac{1}{N}\sum_{i=1}^N\delta_{X_t^{i,N}}$ and $(W^i)_{1\leq i\leq N}$ are independent $k$-dimensional standard Brownian motions.
We give a result on the well-posedness of \eqref{HSM}.
\begin{thm}\label{wip}
Under \eqref{HTL}, \eqref{HSM} is well-posed.
\end{thm}
\begin{proof} For any $\xi=\left(
                           \begin{array}{c}
                             \xi_1 \\
                             \xi_2 \\
                             \cdots \\
                             \xi_{N}\\
                           \end{array}
                         \right)
\in(\C^{n})^{N}$, let $\mu_N^{\xi}=\frac{1}{N}\sum_{i=1}^N\delta_{\xi_i}$ and define
$$\tilde{H}(t,\xi)=\left(
                                   \begin{array}{c}
                                     H(t,\xi_1,\mu_N^\xi) \\
                                     H(t,\xi_2, \mu_N^\xi) \\
                                     \cdots \\
                                     H(t,\xi_N,\mu_N^\xi) \\
                                   \end{array}
                                 \right),
\tilde{\Sigma}(t,\xi)=\left(
                                   \begin{array}{cccc}
                                     \Sigma(t,\xi_1,\mu_N^\xi) & 0_{n\times k} & \cdots & 0_{n\times k} \\
                                     0_{n\times k} & \Sigma(t,\xi_2,\mu_N^\xi) &\cdots & 0_{n\times k} \\
                                     \cdots & \cdots & \cdots& \cdots \\
                                     0_{n\times k} & 0_{n\times k} & \cdots & \Sigma(t,\xi_N,\mu_N^\xi) \\
                                   \end{array}
                                 \right).
$$
Note that for $\xi,\eta\in(\C^{n})^{N}$, it holds
\begin{align}\label{wdi00}\nonumber\W_{\theta}(\mu_N^\xi,\mu_N^\eta)&=\W_{\theta}\left(\frac{1}{N}\sum_{i=1}^N\delta_{\xi_i}, \frac{1}{N}\sum_{i=1}^N\delta_{\eta_i}\right)\\
&\leq \left(\frac{1}{N}\sum_{i=1}^N\|\xi_i-\eta_i\|_\infty^\theta\right)^{\frac{1}{\theta}}\leq c(\theta,N)\|\xi-\eta\|_\infty
\end{align}
for some constant $c(\theta,N)>0$.
Consider path dependent SDE on $\R^{nN}$:
\begin{align}\label{NDE}
\d X(t)&=\tilde{H}(t,X_t)\d t+\tilde{\Sigma}(t,X_t)\d W_N(t),
\end{align}
where $W_N=\left(
             \begin{array}{c}
               W^1 \\
               W^2 \\
               \cdots \\
               W^N \\
             \end{array}
           \right)
$ is a $k N$-dimensional Brownian motion.
By \eqref{HTL} and \eqref{wdi00},  we have
\begin{equation*}\beg{split}
&|\tilde{H}(t,\xi)-\tilde{H}(t,\eta)|+\|\tilde{\Sigma}(t,\xi)-\tilde{\Sigma}(t,\eta)\|\leq C\|\xi-\eta\|_{\infty},\ \ \xi,\eta\in(\C^{n})^{N}.
\end{split}\end{equation*}
So, it is standard that \eqref{NDE} is well-posed and so is \eqref{HSM}.
\end{proof}

\beg{thebibliography}{99}

%\bibitem{B} K. Bahlali,  \emph{Flows of homeomorphisms of stochastic differential equations with measurable drift,}  Stochastic Rep., 67(1999), 53-82.
\bibitem{AZ} G. B. Arous, O. Zeitouni, \emph{Increasing propagation of chaos for mean field models,} Ann. Inst. Henri Poincar\'e Probab. Stat. 35(1999), 85-102.
%\bibitem{Ban} S. Bachmann,  \emph{Well-posedness and stability for a class of stochastic delay differential equations with singular drift,}  Stoch. Dyn., https://doi.org/10.1142/S0219493718500193.

%\bibitem{BBP} M. Bauer, T. M-Brandis, F. Proske, \emph{Strong Solutions of Mean-Field Stochastic Differential Equations with irregular drift,} Electron. J. Probab. 23(2018), 1-35.

\bibitem{BR1} V. Barbu, M. R\"ockner, \emph{Probabilistic representation for solutions to non-linear Fokker-Planck equations,} SIAM J. Math. Anal. 50(2018), 4246-4260.

\bibitem{BR2} V. Barbu and M. R\"ockner, \emph{From non-linear Fokker-Planck equations to solutions of distribution dependent SDE,} Ann. Probab. 48(2020), 1902-1920.

\bibitem{BWY13} J. Bao, F.-Y. Wang, C. Yuan, \emph{Transportation Cost Inequalities for Neutral Functional Stochastic Equations,} Z. Anal. Anwend. 32(2013), 457-475.

%\bibitem{BGT} N.H. Bingham,  C.M. Goldie, J.L. Teugels, \emph{Regular variation,} Cambridge University Press, Cambridge, 1987.

\bibitem{BWY} J. Bao, F.-Y. Wang, C. Yuan, \emph{Derivative formula and Harnack inequality for degenerate functionals SDEs,} Stoch. Dyn. 13(2013), 943-951.

%\bibitem{C1} P.-E. Chaudru de Raynal, \emph{ Weak regularization by stochastic drift: result and counter example,} Discrete Cont Dyn-A, 38(2018), 1269-1291.

%\bibitem{C2} P.-E. Chaudru de Raynal, S. Menozzi, \emph{Regularization effects of a noise propagating through a chain of differential equations: an almost sharp result,} arXiv:1710.03620.

\bibitem{BO} R. J. Berman, M. \"{O}nnheim, \emph{Propagation of Chaos for a Class of First Order Models with Singular Mean Field Interactions,} SIAM J. Math. Anal. 51(2019), 159-196.

%\bibitem{C} P. E. Chaudru de Raynal, \emph{Strong well-posedness of McKean-Vlasov stochastic differential equation with H\"{o}lder drift,} Stochastic Process Appl. 130(2020), 79-107.

\bibitem{CN} P.-E. Chaudry De Raynal, N. Frikha, \emph{Well-posedness for some non-linear SDEs and related PDE on the Wasserstein space}, arXiv:1811.06904, to appear in J. Math. Pures Appl..

\bibitem{DGW} H. Djellout, A. Guillin, L. Wu, \emph{Transportation cost-information inequalities and applications to random dynamical systems and diffusions,} Ann. Probab. 32(2004), 2702-2732.

%\bibitem{CK} I. Csisz\'ar,  J. K\"orne, \emph{Information Theory: Coding Theorems for Discrete Memory-less Systems,}  Academic Press, New York, 1981.

%\bibitem{CZ} Z.-Q Chen, X. C. Zhang, \emph{Propagation of regularity in $L^p$-spaces for Kolmogorov type hypoelliptic operators,} arXiv:1706.02181.

\bibitem{FF} E. Fedrizzi, F. Flandoli, E. Priola, J. Vovelle, \emph{Regularity of stochastic kinetic equations,} Electron. J. Probab. 22(2017), 1-48.

\bibitem{FG} N. Fournier, A. Guillin, \emph{On the rate of convergence in Wasserstein distance of the empirical measure,} Probab. Theory Related Fields 162(2015), 707-738.

\bibitem{G} J. G\"{a}rtner, \emph{On the McKean-Vlasov limit for interacting diffusions,} Mathematische Nachrichten 137(1988), 197-248.

\bibitem{GKMPPT} C. Graham, T. Kurtz, S. M\'{e}l\'{e}ard, P. Protter, M. Pulvirenti, D. Talay, \emph{Probabilistic Models for Non-linear
Partial Differential Equations, Lecture Notes in Mathematics 1627,} Springer-Verlag (1996).

\bibitem{GW} A. Guillin, F.-Y. Wang, \emph{Degenerate Fokker-Planck equations: Bismut formula, gradient estimate and Harnack inequality,} J. Differential Equations 253(2012), 20-40.

%\bibitem{H} X. Huang,  \emph{Strong Solutions for Functional SDEs with Singular Drift,}  Stoch. Dyn., 18, 1850015(2018), 19 pages.

\bibitem{HX} X. Huang,  \emph{Path-distribution dependent SDEs with singular coefficients,}  Electron. J. Probab. 26(2021), 1-21.

%\bibitem{HL}  W. Lv,  X. Huang, \emph{Harnack and Shift Harnack Inequalities for Degenerate (Functional) Stochastic Partial Differential Equations with Singular Drifts,} J. Theor. Probab. 34(2021), 827-851.

\bibitem{HLF}   X. Huang, W. Lv, \emph{Stochastic functional Hamiltonian system with singular coefficients,} Commun. Pur. Appl. Anal. 19(2020), 1257-1273.

\bibitem{HRW} X. Huang, M. R\"{o}ckner, F.-Y. Wang, \emph{Non-linear Fokker--Planck equations for probability measures on path space and path-distribution dependent SDEs,}  Discrete Contin. Dyn. Syst. 39(2019), 3017-3035.

\bibitem{HS} X. Huang, Y. Song, \emph{Well-posedness and regularity for distribution dependent SPDEs with singular drifts,} Nonlinear Anal. 203(2021), 112167.

%\bibitem{HW} X. Huang, F.-Y. Wang, \emph{Functional SPDE with Multiplicative Noise and Dini Drift,}  Ann. Fac. Sci. Toulouse Math., 6(2017), 519-537.

%\bibitem{HW0} X. Huang, F.-Y. Wang, \emph{Degenerate SDEs with singular drift and applications to Heisenberg groups,}  J. Differential Equations, 265(2018), 2745-2777.

%\bibitem{HW18} X. Huang, F.-Y. Wang, \emph{Distribution dependent SDEs with singular coefficients,} Stochastic Process Appl. 129(2019), 4747-4770.
%\bibitem{Pin} M. S. Pinsker, \emph{ Information and Information Stability of Random Variables and Processes,} Holden-Day, San Francisco, 1964.
%\bibitem{HW21} X. Huang, F.-Y. Wang, \emph{McKean-Vlasov SDEs with drifts discontinuous under Wasserstein distance,} Disc. Cont. Dyn. Syst. 41(2021),1667-1679.

%\bibitem{HW21+} X. Huang, F.-Y. Wang, \emph{Singular McKean-Vlasov (Reflecting) SDEs with Distribution Dependent Noise,}	arXiv:2012.05014.

\bibitem{JR} B. Jourdain, J. Reygner, \emph{Propagation of chaos for rank-based interacting diffusions and long time behaviour of a scalar quasilinear parabolic equation,}  Stoch. PDE: Anal. Comp. 1(2013), 455-506.

%\bibitem{KRo} N. V. Krylov, M. R\"{o}ckner, \emph{Strong solutions of stochastic equations with singular time dependent drift,} Probab. Theory Related Fields 131(2005), 154-196.

\bibitem{L} D. Lacker, \emph{On a strong form of propagation of chaos for McKean-Vlasov equations,} Electron. Commun. Probab. 23(2018), 1-11.

\bibitem{M} H. P. McKean, \emph{A class of Markov processes associated with nonlinear parabolic equations,} Proc Natl Acad Sci U S A, 56(1966), 1907-1911.

\bibitem{MD} L. Miclo, P. Del Moral,
\emph{Genealogies and Increasing Propagation of Chaos For Feynman-Kac and Genetic Models,}
Ann. Appl. Probab. 11(2001), 1166-1198.

\bibitem{MV} Yu. S. Mishura, A. Yu. Veretennikov, \emph{Existence and uniqueness theorems for solutions of McKean-Vlasov stochastic equations,} Theor. Probability and Math. Statist. 103(2020), 59-101.

\bibitem{NT} M. Nagasawat, H. Tanaka, \emph{Diffusion with Interactions and Collisions Between
Coloured Particles and the Propagation of Chaos,} Probab. Theory Related Fields 74(1987), 161-198.

\bibitem{Pin} M. S. Pinsker, \emph{Information and Information Stability of Random Variables and Processes,} Holden-Day, San Francisco, 1964.

\bibitem{RW}  P. Ren, F.-Y. Wang, \emph{Exponential convergence in entropy and Wasserstein for McKean-Vlasov SDEs,}  Nonlinear Anal. 206(2021), 112259.
%\bibitem{RS} Max-K. von Renesse, M. Scheutzow, \emph{Existence and uniqueness of solutions of stochastic functional differential equations,}   Random Operators and Stochastic Equations, 18 (2010), 267-284.
\bibitem{RZ} M. R\"{o}ckner, X. Zhang, \emph{Well-posedness of distribution dependent SDEs with singular drifts,} Bernoulli 27(2021), 1131-1158.

%\bibitem{S} T. Seidman,\emph{ How violent are fast controls?}  Mathematics of Control Signals Systems,  1(1988), 89-95.

%\bibitem{SWY} J. Shao, F.-Y. Wang, C. Yuan,  \emph{Harnack inequalities for stochastic (functional) differential equations with non-Lipschitzian coefficients,} Elect. J. Probab., 17(2012), 1-18.

\bibitem{RKF} S. T. Rachev, L. B. Klebanov, S. V. Stoyanov, F. J. Fabozzi, \emph{Glivenko-Cantelli Theorem and Bernstein-Kantorovich Invariance Principle,} The Methods of Distances in the Theory of Probability and Statistics. Springer, New York, 2013.

\bibitem{SZ} A.-S. Sznitman,   \emph{Topics in propagation of chaos,} In $``$\'Ecole d'\'Et\'e de Probabilit\'es de Sain-Flour XIX-1989", Lecture Notes in Mathematics  1464, p. 165-251, Springer, Berlin, 1991.

\bibitem{V} C. Villani,  \emph{Hypocoercivity,} Mem. Amer. Math. Soc. 202(2009).

\bibitem{Wbook} F.-Y. Wang, \emph{Harnack Inequality for Stochastic Partial Differential Equations,} Springer, New York, 2013.

\bibitem{FYW1} F.-Y. Wang, \emph{Distribution-dependent SDEs for Landau type equations,} Stochastic Process Appl. 128(2018), 595-621.

\bibitem{W2} F.-Y. Wang, \emph{Hypercontractivity and Applications for Stochastic Hamiltonian Systems,} J. Funct. Anal. 272(2017), 5360-5383.

\bibitem{WZ1} F.-Y. Wang, X. Zhang, \emph{Derivative formula and applications for degenerate diffusion semigroups,} J. Math. Pures Appl. 99(2013),726-740.

\bibitem{WZ} F.-Y. Wang, X. Zhang,  \emph{Degenerate SDE with H\"{o}lder-Dini Drift and Non-Lipschitz Noise Coefficient,} SIAM J. Math. Anal. 48(2016), 2189-2226.

%\bibitem{XXZZ} P. Xia, L. Xie, X. Zhang, G. Zhao, \emph{$L^q(L^p)$-theory of stochastic differential equations,} Stochastic Process Appl. 130(2020), 5188-5211.

%\bibitem{XZ} L. Xie, X. Zhang, \emph{Ergodicity of stochastic differential equations with jumps and singular coefficients,} Ann. Inst. Henri Poincar\'{e} Probab. Stat. 56(2020), 175-229.

%\bibitem{YZ} S.-Q. Zhang, C. Yuan, \emph{A Zvonkin's transformation for stochastic differential equations with singular drift and applicationsa,} J. Differential Equations 297(2021), 277-319.

\bibitem{Z1} X. Zhang, \emph{Stochastic flows and Bismut formulas for stochastic Hamiltonian systems,} Stochastic Process Appl. 120(2010), 1929-1949.

\bibitem{Z2} X. Zhang, \emph{Stochastic hamiltonian flows with singular coefficients,} Sci. China Math. 61(2018),1353-1384.

%\bibitem{Z4}  X. Zhang, \emph{Weak solutions of McKean-Vlasov SDEs with supercritical drifts,} arXiv:2010.15330.

\bibitem{Z5} X. Zhang, \emph{Second order McKean-Vlasov SDEs and kinetic Fokker-Planck-Kolmogorov equations,} arXiv:2109.01273.

%\bibitem{Zha} G. Zhao, \emph{On Distribution depend SDEs with singular drifts,} arXiv:2003.04829.
%\bibitem{PW} E. Priola, F.-Y. Wang, \emph{Gradient estimates for diffusion semigroups with singular coefficients,}  J. Funct. Anal. 236(2006), 244--264.

%\bibitem{ZV} A. K. Zvonkin,  \emph{A transformation of the phase space of a diffusion process that removes the drift,}  Math. Sb., 93(1974), 129-149, 152.

\end{thebibliography}

\end{document}